\newtheorem{theorem}{Theorem}[section]
\newtheorem{proposition}[theorem]{Proposition}
\newtheorem{corollary}[theorem]{Corollary}
\theoremstyle{definition}
\newtheorem{definition}[theorem]{Definition}
\newtheorem{example}[theorem]{Example}
\newcommand{\RR}{\mathbb{R}}
\newcommand{\CC}{\mathbb{C}}
\newcommand{\DD}{\mathbb{D}}
\newcommand{\NN}{\mathbb{N}}
\newcommand{\ZZ}{\mathbb{Z}}
\newcommand{\cA}{\mathcal{A}}
\newcommand{\cO}{\mathcal{O}}
\newcommand{\Int}{\text{Int}\,}
\numberwithin{equation}{section}
\numberwithin{figure}{section}
\newcommand\N{\mathbb{N}}
\newcommand\R{\mathbb{R}}
\newcommand\igot{\mathfrak{i}}
\renewcommand\igot{\mathfrak{i}}
\newcommand\e{\mathrm{e}}
\renewcommand\imath{\igot}
\newcommand\Id{\mathrm{Id}}
\numberwithin{equation}{section}
\begin{document}
\title{Families of proper holomorphic maps}

\author{Barbara Drinovec Drnov\v sek and Jure Kali{\v s}nik}

\address{Barbara Drinovec Drnov\v sek, Faculty of Mathematics and Physics, University of Ljubljana, and Institute of Mathematics, Physics, and Mechanics, 
Jadranska 19, 1000 Ljubljana, Slovenia}
\email{barbara.drinovec@fmf.uni-lj.si}

\address{Jure Kali{\v s}nik, Faculty of Mathematics and Physics, University of Ljubljana, and Institute of Mathematics, Physics, and Mechanics, 
Jadranska 19, 1000 Ljubljana, Slovenia}
\email{jure kalisnik@fmf.uni-lj.si}

\subjclass[2010]{Primary 32H35.  Secondary  32H02, 53A10}

\date{\today}

\keywords{Riemann surface, proper holomorphic map}

\begin{abstract}
Given a smooth, open, oriented surface $X$ endowed with a family of complex structures $\{J_b\}_{b\in B}$ depending continuously 
on the parameter $b$ in  a metrisable space $B$, we construct a continuous family of proper holomorphic maps
$F_{b}:(X,J_b)\to\CC^{2}$, $b\in B$.
\end{abstract}

\maketitle

\centerline{\em Dedicated to Josip Globevnik}


%
%
%
%
\section{Introduction}\label{sec:intro}

Every smooth, open, oriented surface $X$ endowed with an almost complex structure $J$ is a Riemann surface.
Therefore, by choosing  a continuously varying family of almost complex structures $(J_b)_{b\in B}$ for some parameter space  $B$,
we determine a family of open Riemann surfaces $(X,J_b)_{b\in B}$.
In 2025, Forstneri\v c  \cite{Forstneric2025} initiated the study of continuous maps $F$ from $B\times X$ to Euclidean space, or more generally, to an Oka manifold,
such that for each ${b\in B}$ the map $F(b,\cdot)$ is holomorphic on the Riemann surface $(X,J_b)$.
In this framework, he obtained the Runge and Mergelyan approximation theorems, as well as the Weierstrass interpolation theorem. 

Our main result answers in part the question 
raised by Forstneri\v c \cite[Problem 8.7 (a)]{Forstneric2025} concerning the existence of proper holomorphic maps in this setting:

\begin{theorem}\label{Main theorem} Let $X$ be a smooth, connected, open, oriented surface, $B$ a metrisable space, and 
$\{J_b\}_{b\in B}$ a continuous family of complex structures on $X$ of class $C^{(k,\alpha)}$ with $k\in \mathbb Z_+,0<\alpha<1$. 
Then there exists a continuous map $F:B\times X\to\CC^{2}$ such that for every $b\in B$ the map $F(b,\cdot):(X,J_b)\to\CC^{2}$ is
proper holomorphic.
\end{theorem}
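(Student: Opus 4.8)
The plan is to obtain $F$ as a locally uniform limit $F=\lim_{n\to\infty}F_n$ of continuous maps $F_n\colon B\times X\to\CC^2$, holomorphic in the second variable for each $J_b$, built by induction along a \emph{parameter-independent} exhaustion of $X$, while the modulus $\|F_n(b,\cdot)\|$ is forced to grow at the ends of $X$ in a controlled way. First I would fix a smooth proper Morse exhaustion $\tau\colon X\to[0,\infty)$ with no critical point of index $2$, together with regular values $0<t_1<t_2<\cdots\to\infty$, each interval $[t_n,t_{n+1}]$ carrying at most one critical value, and put $K_n:=\{\tau\le t_n\}$. Because $\tau$ has no index-$2$ critical point, no connected component of $X\setminus K_n$ is relatively compact. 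The key structural remark is that, on an open Riemann surface, a compact set is holomorphically convex \emph{exactly} when its complement has no relatively compact component --- a condition that does not involve the complex structure. Hence every $K_n$, and likewise the admissible set obtained by adjoining to $K_n$ the core arc of the $1$-handle appearing between $K_n$ and $K_{n+1}$, is $\mathcal{O}$-convex / admissible for $(X,J_b)$ \emph{simultaneously for all $b\in B$}, so the parametric Runge, Mergelyan and Weierstrass theorems of \cite{Forstneric2025} apply all along the tower $(K_n)_n$. Each step $K_n\subset K_{n+1}$ is moreover one of three elementary moves: attaching a collar, the birth of a disc component (index $0$), or attaching a single $1$-handle (index $1$, possibly joining two components).

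The inductive invariant I would carry is that $F_n$ is continuous on $B\times V_n$ for some neighbourhood $V_n\supset K_n$, holomorphic in the second variable, and
\[
\|F_n(b,x)\|^2\ \ge\ \tau(x)-C_n\qquad\text{for all }b\in B,\ x\in K_n,
\]
where $C_1<C_2<\cdots$ is a fixed bounded sequence absorbing the approximation errors; additionally $\|F_{n+1}-F_n\|<\epsilon_n$ on $B\times K_n$ with $\sum_n\epsilon_n$ chosen small relative to $\sup_{K_n}\tau$. Then $(F_n)$ converges locally uniformly to a continuous $F$ on $B\times X$ with each $F(b,\cdot)$ holomorphic on $(X,J_b)$, and letting $n\to\infty$ in the displayed inequality yields $\|F(b,x)\|^2\ge\tau(x)-C'$ for a uniform constant $C'$; since $\tau$ is proper, $\|F(b,\cdot)\|$ is a proper exhaustion of $X$ and hence $F(b,\cdot)\colon(X,J_b)\to\CC^2$ is proper, for every $b$.

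The inductive step has three parts. Given $F_n=(f_1,f_2)$, I would first \emph{extend} it past $K_n$: by the parametric Runge theorem for a collar step, by the parametric Mergelyan theorem on the admissible set $K_n\cup(\text{handle core})$ for a $1$-handle step, and --- on a newly born disc component $D$, where $K_n$ imposes no constraint --- simply by prescribing a large constant first coordinate and an arbitrary holomorphic second coordinate on $D$; this gives a continuous $\widehat F=(\widehat f_1,\widehat f_2)$ holomorphic near $K_{n+1}$, close to $F_n$ on $B\times K_n$. I would then \emph{clear common zeros} on the new shell $S_n:=\overline{K_{n+1}\setminus K_n}$: a small holomorphic perturbation of $\widehat f_1$ (parametric Runge, with a parametric general-position argument) makes $\widehat f_1(b,\cdot)$ and $\widehat f_2(b,\cdot)$ have no common zero on $S_n$ for any $b$, so $\|\widehat F(b,\cdot)\|$ is bounded below by a positive constant on $S_n$, locally uniformly in $b$. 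Finally I would \emph{boost the modulus}: multiply, setting $F_{n+1}:=\mu\cdot\widehat F$ with $\mu(b,\cdot)$ holomorphic on $(X,J_b)$, $\mu\approx1$ on $K_n$, and $|\mu(b,x)|^2\ge(\tau(x)-C_{n+1})/\|\widehat F(b,x)\|^2$ on $S_n$ --- such $\mu$ being produced by the parametric Runge theorem applied to a suitable locally constant datum (after, if necessary, enlarging sets to fill holes). Then $\|F_{n+1}(b,x)\|^2=|\mu|^2\|\widehat F\|^2\ge\tau(x)-C_{n+1}$ on $S_n$, while $F_{n+1}\approx\widehat F\approx F_n$ on $K_n$, completing the step. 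The apparent obstruction --- a holomorphic function cannot jump from ``$\approx$ old values'' on $K_n$ to ``large'' across $\di K_n$ --- does not occur, because the value $\tau(x)$ that $\|F_{n+1}(b,x)\|^2$ must attain at $x$ near $\di K_n$ is essentially the value $\|F_n(b,x)\|^2$ already has there; $\mu$ is forced to grow only deeper inside $S_n$.

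I expect the main obstacle to be carrying out the zero-clearing perturbation and the construction of the multiplier \emph{uniformly over the parameter space} $B$: the common zeros of $(\widehat f_1,\widehat f_2)$, the loci on $S_n$ where $\|\widehat F\|$ is small, and the holes one must fill to run a Runge argument all move with $b$ and may change combinatorial type, so there is no reduction to finitely many continuously varying points. This is precisely the setting for which the parametric approximation and interpolation theorems of \cite{Forstneric2025} are tailored, and the proof essentially amounts to arranging the three operations above into a disciplined sequence of applications of those theorems. The remaining points --- the handle and disc-component cases in full detail, the bookkeeping of $\epsilon_n$ and $C_n$, convergence of $(F_n)$ and properness of the limit --- are then routine; for a general metrisable $B$ one works globally as the cited parametric theorems allow, or else patches local constructions over a locally finite cover of $B$.
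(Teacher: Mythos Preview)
Your scheme has a genuine gap at the ``boost the modulus'' step. You want a holomorphic multiplier $\mu$ with $\mu\approx 1$ on all of $K_n$ and $|\mu|$ large on (at least) the outer part of the shell $S_n$. Such a $\mu$ cannot exist. Take the model case $X=\CC$, $K_n=\overline{\DD}$, $K_{n+1}=2\overline{\DD}$: if $|\mu-1|<\epsilon$ on $\overline{\DD}$ then $\mu$ is zero-free there, and for any $1<r\le 2$ on which $\mu$ is zero-free one has, by the mean value property of $\log|\mu|$, that the average of $\log|\mu|$ over $|z|=r$ equals $\log|\mu(0)|\approx 0$; hence $|\mu|$ cannot be uniformly large on any such circle. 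Allowing zeros of $\mu$ in $S_n$ does not help, since near each zero $|\mu|\cdot\|\widehat F\|$ is small and your lower bound fails there. Equivalently, your ``locally constant datum'' ($\mu=1$ on $K_n$, $\mu=M$ on an outer collar $L\subset S_n$) is posed on a set $K_n\cup L$ that is \emph{not} Runge --- the region between them is a relatively compact hole --- and ``enlarging to fill holes'' forces $L$ to meet $K_n$, making the datum discontinuous. The remark that ``$\mu$ is forced to grow only deeper inside $S_n$'' does not rescue this: the obstruction is global on each level circle, not local at $\partial K_n$. (The zero-clearing step is also delicate --- avoiding common zeros for \emph{every} $b\in B$ simultaneously is a transversality statement not supplied by the parametric Runge/Mergelyan package --- but the multiplier issue is already fatal.)

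The paper avoids this obstruction by abandoning the scalar-multiplier idea entirely. Instead it uses the two components of $F=(F_1,F_2)$ asymmetrically: each boundary circle of $K_n$ is cut into a continuous family of alternating ``odd'' and ``even'' arcs, with $\Re F_{n,1}>n$ on the odd arcs and $\Re F_{n,2}>n$ on the even arcs; these are propagated outward along radial segments, and then $F_{n,1}$ is redefined to be a large constant on disjoint simply connected pieces sitting inside the even sectors (and symmetrically for $F_{n,2}$). The crucial point is that $K_n\cup(\text{odd sectors})\cup(\text{those pieces in the even sectors})$ \emph{is} Runge --- no hole is created --- so the parametric Mergelyan theorem applies. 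In the limit one gets $\max\{\Re F_1,\Re F_2\}\to\infty$ at the ends, which gives properness. The essential missing idea in your proposal is this arc-splitting / two-component mechanism (going back to Alarc\'on--L\'opez and Alarc\'on--Forstneri\v c) that replaces the impossible ``make $\|F\|$ large everywhere on a collar'' by ``make $\Re F_1$ large on half, $\Re F_2$ large on the other half,'' arranged so that all Mergelyan approximations are on genuinely Runge families.
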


Precise definitions will be given in the next section.
It is classical that for every open Riemann surface there is a proper holomorphic immersion into $\CC^2$ and 
a proper holomorphic embedding into $\CC^3$, see \cite[Theorem 2.4.1]{Forstneric2017} and the references therein.

By increasing the dimension of the target Euclidean space by one, we obtain a family of proper holomorphic immersions:

\begin{corollary}
Let $X$ be a smooth, connected, open, oriented surface, $B$ a finite CW complex or a smooth manifold, and 
$\{J_b\}_{b\in B}$ a continuous family of complex structures on $X$ of class $C^{(k,\alpha)}$ with $k\ge 1,0<\alpha<1$. 
Then there exists a continuous map $G:B\times X\to\CC^{3}$ such that for every $b\in B$ the map $G(b,\cdot):(X,J_b)\to\CC^{3}$ is
a proper holomorphic immersion.
\end{corollary}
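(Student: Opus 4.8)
The plan is to deduce the Corollary from Theorem~\ref{Main theorem} by adjoining one more coordinate to the maps it provides. By Theorem~\ref{Main theorem} there is a continuous map $F=(F_1,F_2)\colon B\times X\to\CC^2$ whose restriction $F_b:=F(b,\cdot)$ to each fibre is a proper holomorphic map on $(X,J_b)$; we shall look for a continuous $g\colon B\times X\to\CC$ with each $g_b:=g(b,\cdot)$ holomorphic on $(X,J_b)$ and put $G:=(F_1,F_2,g)$. Properness of $G_b$ is then automatic, since $\|G_b(x)\|\ge\|F_b(x)\|$ and $\|F_b(x)\|\to\infty$ as $x$ leaves the compact subsets of $X$, so preimages of balls under $G_b$ are compact. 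Because $F_b$ is holomorphic, its critical set $C_b:=\{p\in X:\ dF_b|_p=0\}$ is a closed discrete subset of $X$, and the differential $dG_b=(dF_1^b,dF_2^b,dg_b)$ is nowhere vanishing --- that is, $G_b$ is an immersion --- exactly when $dg_b|_p\ne0$ for every $p\in C_b$. Thus the whole task is to construct a continuous, fibrewise holomorphic $g$ with $dg_b$ non-vanishing on $C_b$ for all $b$.

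I would first fix a continuous family $\{\theta_b\}_{b\in B}$ of nowhere-vanishing holomorphic $1$-forms on the $(X,J_b)$; such a family exists because every holomorphic line bundle on an open Riemann surface is trivial, and the trivialisation can be chosen to vary continuously with $b$ (a parametric Oka-type statement, for which finiteness or manifold structure of $B$ is convenient, and which in any case follows from the methods of \cite{Forstneric2025}). It then suffices to produce $g$ with the prescribed $1$-jet $g_b(p)=0$, $dg_b|_p=\theta_b|_p$ at every point $p$ of $C_b$ and every $b$, for then $dg_b|_p=\theta_b|_p\ne0$. This is a parametric Weierstrass interpolation problem: the interpolation data are carried by the closed set $\Sigma:=\{(b,p)\in B\times X:\ dF_b|_p=0\}$, which meets each fibre in a discrete set, and vary continuously with $b$. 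One solves it by the parametric Weierstrass theorem of \cite{Forstneric2025}. If that theorem is available only for data carried by a tamely varying finite configuration, one proceeds instead along a normal exhaustion $K_1\Subset K_2\Subset\cdots$ of $X$ by Runge domains (Runge-ness being a topological, hence $J_b$-independent, condition) and over a cover of $B$ by open sets on which the total ramification multiplicity of $F_b$ in each relevant coordinate disc is constant; one solves the interpolation problem locally over $B$, amalgamates by a partition of unity on $B$ --- this preserves fibrewise holomorphy, and since every local solution realises the same prescribed $1$-jet $(0,\theta_b|_p)$ along $\Sigma$, so do all convex combinations, so no cancellation occurs --- and passes to the limit with the customary $\varepsilon_n$-bookkeeping and the Cauchy estimates, which keep $g$ continuous, fibrewise holomorphic, and still with $dg_b$ non-vanishing on $C_b$.

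The main obstacle is precisely the parametric, moving nature of the critical locus: as $b$ varies the critical points of $F_b$ move and may collide (increasing multiplicity) or split, so $\Sigma$ is a genuinely varying divisor rather than a family of fixed finite sets, and the parametric interpolation must be carried out in a form robust to this; this, together with the construction of the family $\{\theta_b\}$, is where the hypothesis that $B$ is a finite CW complex or a smooth manifold is used. The remaining steps --- automatic properness of $G_b$, the reduction of the immersion property to non-vanishing of $dg_b$ along $C_b$, and the Runge/Mergelyan-type globalisation over the exhaustion of $X$ --- are routine given Theorem~\ref{Main theorem} and the approximation and interpolation theorems of \cite{Forstneric2025}.
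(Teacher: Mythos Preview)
Your overall plan --- take $F$ from Theorem~\ref{Main theorem} and adjoin a third coordinate $g$ so that $G=(F,g)$ is a fibrewise immersion --- is exactly the paper's strategy, and your observations that properness of $G_b$ is automatic and that the immersion condition reduces to $dg_b\ne 0$ on the critical set $C_b$ of $F_b$ are correct. However, you have made the construction of $g$ much harder than necessary.

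The paper simply invokes \cite[Corollary~8.3]{Forstneric2025}, which already gives a continuous family of holomorphic \emph{immersions} $h_b:(X,J_b)\to\CC$; then $dh_b$ is nowhere vanishing on all of $X$, so in particular on $C_b$, and $G=(F,h)$ works with no further argument. Notice that your own first step --- a continuous family $\{\theta_b\}$ of nowhere-vanishing holomorphic $1$-forms, obtained ``from the methods of \cite{Forstneric2025}'' --- is essentially the same black box: if you have $h_b$ you can take $\theta_b=dh_b$, and conversely the parametric existence of such $\theta_b$ is precisely what that corollary (or its proof) supplies. Once you have the immersion $h$ in hand, your entire interpolation argument on the moving critical locus $\Sigma$ becomes superfluous: just set $g=h$.

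So there is no genuine error, but your route is circuitous: you invoke the same nontrivial input as the paper, then layer on a delicate parametric Weierstrass interpolation over a varying divisor (with collisions and splittings, as you note) that you never actually need. The paper's three-line proof avoids all of that.
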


\begin{proof}
By  \cite[Corollary 8.3]{Forstneric2025} there exists a continuous function $h:B\times X\to\CC$ such that $h(b,\cdot):(X,J_b)\to\CC$
is a holomorphic immersion for every $b\in B$. Let  $F:B\times X\to\CC^{2}$ be a continuous map such that for every $b\in B$ the map $F(b,\cdot):(X,J_b)\to\CC^{2}$ is
proper holomorphic, provided by Theorem \ref{Main theorem}.
Then the map $(F,h):B\times X\to\CC^{3}$ is continuous and  for every $b\in B$ the map $(F,h)(b,\cdot):(X,J_b)\to\CC^{3}$ is
a  proper holomorphic immersion.
\end{proof}

We extend to families the result of Forstneri\v c and Globevnik \cite[Theorem 1.4]{FGMRL}, Alarc\'on and L\'opez \cite[Corollary 1.1]{ALJDG}, and Andrist and Wold \cite[Theorem 5.6]{AWAIF} on proper harmonic maps from open Riemann surfaces to $\R^{2}$:

\begin{theorem}\label{harmonic}
Let $X$ be a smooth, connected, open, oriented surface, $B$ a metrisable space, and 
$\{J_b\}_{b\in B}$ a continuous family of complex structures on $X$ of class $C^{(k,\alpha)}$ with $k\in \mathbb Z_+,0<\alpha<1$. 
There exists a continuous map $H:B\times X\to\R^{2}$ such that for every $b\in B$ 
the map $H(b,\cdot):(X,J_b)\to\R^{2}$ is proper harmonic.
\end{theorem}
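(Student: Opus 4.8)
The plan is to run, with harmonic maps into $\RR^{2}$ in place of holomorphic maps into $\CC^{2}$, the same exhaustion argument that proves Theorem~\ref{Main theorem}, which is itself the parametric elaboration of the constructions of \cite{FGMRL}, \cite{ALJDG}, \cite{AWAIF}. Recall that $H\colon(X,J_b)\to\RR^{2}$ is harmonic precisely when both of its components are harmonic functions on $(X,J_b)$, i.e.\ locally real parts of $J_b$-holomorphic functions, and that $H(b,\cdot)$ is proper iff $|H(b,x)|\to\infty$ as $x$ leaves the compact subsets of $X$. So it suffices to produce a continuous map $H\colon B\times X\to\RR^{2}$ with each $H(b,\cdot)$ harmonic and with $|H(b,\cdot)|$ tending to $+\infty$ along a fixed exhaustion of $X$, locally uniformly in $b$.

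First I would fix, independently of $b$, a smooth exhaustion $\emptyset=X_{0}\Subset X_{1}\Subset X_{2}\Subset\cdots$ of $X$ by connected, relatively compact, smoothly bounded domains such that $X\setminus X_{n}$ has no relatively compact component — so that every $\overline{X_{n}}$ is Runge in $(X,J_b)$ for all $b$, a purely topological condition — and such that each inclusion $X_{n}\hookrightarrow X_{n+1}$ is of one of the two elementary types, a deformation retract or the attachment of a single $1$-handle, exactly as in the proof of Theorem~\ref{Main theorem}. Next I would construct, by induction on $n$, continuous families $H^{(n)}_b$ of harmonic maps defined on a neighbourhood of $\overline{X_{n+1}}$ in $(X,J_b)$, continuous in $b$, so that for a rapidly decreasing sequence $\epsilon_{n}>0$, a fixed smooth proper function $\rho\colon X\to[0,\infty)$, and the margins $\sigma_{n}=2\sum_{j>n}\epsilon_{j}\searrow0$ one has
\[
\sup_{\overline{X_{n}}}\bigl|H^{(n)}_b-H^{(n-1)}_b\bigr|<\epsilon_{n}
\qquad\text{and}\qquad
\bigl|H^{(n)}_b(x)\bigr|>\rho(x)+\sigma_{n}\ \ \text{for all }x\in\overline{X_{n+1}}.
\]
The shrinking margin absorbs the summable later perturbations on $\overline{X_{n}}$, while on the new part $\overline{X_{n+1}}\setminus X_{n}$ the lower bound is installed afresh; passing to the limit $H_b=\lim_{n}H^{(n)}_b$ — a locally uniform limit of harmonic maps, hence harmonic, and continuous in $b$ — gives $|H_b(x)|>\rho(x)$, so $H_b$ is proper.

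The substantive point is the inductive step. Given $H^{(n-1)}_b$ on a neighbourhood of $\overline{X_{n}}$ with $|H^{(n-1)}_b|>\rho+\sigma_{n-1}$ there, one first invokes the family Mergelyan approximation theorem of \cite{Forstneric2025}, in its harmonic form — which follows from the stated holomorphic one, since a harmonic function is locally the real part of a holomorphic one (globally one approximates the holomorphic $1$-form $\di u$ while matching its finitely many periods, just as Runge's theorem classically yields harmonic approximation), with the continuity in $b$ furnished by \cite{Forstneric2025} — to approximate $H^{(n-1)}_b$ on $\overline{X_{n}}$ by a harmonic map $\widehat H_b$ defined near $\overline{X_{n+1}}$. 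One then modifies $\widehat H_b$ on $\overline{X_{n+1}}\setminus X_{n}$, keeping it $\epsilon_{n}$-close to $H^{(n-1)}_b$ on $\overline{X_{n}}$, so that its modulus exceeds $\rho+\sigma_{n}$ on all of $\overline{X_{n+1}}$ — using the very device that forces properness in the proof of Theorem~\ref{Main theorem} (adding harmonic corrections built, via the family Runge theorem, from $J_b$-holomorphic functions on $X$ that are negligible on $\overline{X_{n}}$ but very large further out, and near the inner boundary relying on the fact that $\widehat H_b\approx H^{(n-1)}_b$ is already large there). This device transfers verbatim because the only structural property of $|F_b|^{2}$ that it uses is subharmonicity, and $|H_b|^{2}=u_b^{2}+v_b^{2}$ is subharmonic, $\Delta|H_b|^{2}=2(|\nabla u_b|^{2}+|\nabla v_b|^{2})\ge0$, so the maximum principle poses no obstruction to pushing $|H_b|$ to infinity on the shells; the handle case is handled exactly as in Theorem~\ref{Main theorem}.

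The hard part is not any single move but performing this inductive step with genuine continuity in the parameter $b$ over a merely metrisable — hence possibly non-compact — space $B$, and with exact control of the summable errors, the delicate spot being the behaviour of $H^{(n)}_b$ near the inner boundary of each shell, where it is pinned down by the previous stage. This is precisely what the proof of Theorem~\ref{Main theorem} already manages — one localises in $B$ and patches, as in \cite{Forstneric2025} — so the only genuinely new ingredient is the harmonic incarnation of the family approximation theorems, which is routine. Granting this, the theorem follows.
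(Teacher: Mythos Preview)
Your approach is plausible but far more laborious than the paper's, and it rests on an auxiliary result you do not actually need. The paper's proof of Theorem~\ref{harmonic} is a single observation: the map $F=(F_{1},F_{2})$ built in the proof of Theorem~\ref{Main theorem} satisfies, by construction (condition $(b)_{n}$ together with Proposition~\ref{Proposition sequence of functions}), the stronger property $\max\{\Re F_{1}(b,\cdot),\Re F_{2}(b,\cdot)\}\to+\infty$ as one leaves compact sets, not merely $|F_b|\to\infty$. Hence $H=(\Re F_{1},\Re F_{2})$ is already a continuous family of proper harmonic maps into $\RR^{2}$. No new inductive construction, no harmonic Mergelyan theorem, and no period-matching are required.

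By contrast, you propose to rerun the entire exhaustion scheme directly for harmonic maps, which forces you to invoke a family harmonic Mergelyan theorem. You call this ``routine'', but deducing it from the holomorphic version in \cite{Forstneric2025} means approximating the holomorphic $1$-form $\di u_b$ in families while matching its periods continuously in $b$; the cited results are stated for functions, so this is extra work you would have to supply. Your remark that ``the only structural property of $|F_b|^{2}$ that it uses is subharmonicity'' also mischaracterises the mechanism: the proof of Theorem~\ref{Main theorem} never appeals to subharmonicity of $|F|^{2}$; it controls $\Re F_{1}$ and $\Re F_{2}$ separately on interlocking sectors via Mergelyan. Once you see that, you realise that carrying out your programme with $u=\Re F_{1}$ and $v=\Re F_{2}$ collapses exactly to the paper's one-line argument. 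So your route is not wrong, but it reinvents machinery that is already implicit in the holomorphic construction, and the shortcut is simply to take real parts.
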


The proof relies on the proof of Theorem \ref{Main theorem} and we postpone it to Section \ref{sec:proof}.

By Remmert's proper mapping theorem, the image of an analytic subvariety under a proper holomorphic map is an analytic subvariety.
Therefore, the following corollary provides, in particular, a path of complex analytic subvarieties in $\CC^2$ 
from the one parametrised by the complex line to the one parametrised by the unit disc.

\begin{corollary}
Let $X$ be a smooth, connected, open, oriented surface. Let $J_0, J_1$ be
complex structures on $X$ of class $C^{(k,\alpha)}$ with $k\in \mathbb Z_+,0<\alpha<1$. 
There exist a continuous family $\{J_b\}_{b\in[0,1]}$ of complex structures on $X$ of class $C^{(k,\alpha)}$ and
 a continuous map $F:[0,1]\times X\to\CC^{2}$ such that for every $b\in [0,1]$ the map $F(b,\cdot):(X,J_b)\to\CC^{2}$ is proper $J_b$-holomorphic.
\end{corollary}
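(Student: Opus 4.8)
The plan is to first connect $J_0$ and $J_1$ by a continuous path of complex structures of the required regularity, and then invoke Theorem \ref{Main theorem} with parameter space $B=[0,1]$. Since $X$ is a smooth, open, oriented surface, it carries a smooth complex structure; fix one and call it $J_{\mathrm{ref}}$. Working in $J_{\mathrm{ref}}$-holomorphic local coordinates, every complex structure $J$ on $X$ compatible with the orientation is encoded by its Beltrami differential $\mu_J$ relative to $J_{\mathrm{ref}}$, a $(-1,1)$-form with $\|\mu_J\|_{L^\infty(X)}<1$; the assignment $J\mapsto\mu_J$ is a bijection from the set of such $J$ onto the open unit ball of $L^\infty$, and $J$ is of class $C^{(k,\alpha)}$ precisely when $\mu_J$ is. In particular $\mu_0:=\mu_{J_0}$ and $\mu_1:=\mu_{J_1}$ are $C^{(k,\alpha)}$ Beltrami differentials with $\|\mu_0\|_{L^\infty},\|\mu_1\|_{L^\infty}<1$.

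Next I would take the affine homotopy $\mu_b:=(1-b)\mu_0+b\mu_1$ for $b\in[0,1]$. By convexity of the ball, $\|\mu_b\|_{L^\infty}\le (1-b)\|\mu_0\|_{L^\infty}+b\|\mu_1\|_{L^\infty}<1$ for every $b$, so each $\mu_b$ is an admissible Beltrami differential; it is of class $C^{(k,\alpha)}$, with $C^{(k,\alpha)}$-norm bounded uniformly in $b$, and $b\mapsto\mu_b$ is affine, hence continuous. Let $J_b$ denote the complex structure corresponding to $\mu_b$. Then $\{J_b\}_{b\in[0,1]}$ is a continuous family of complex structures on $X$ of class $C^{(k,\alpha)}$ with $J_b|_{b=0}=J_0$ and $J_b|_{b=1}=J_1$.

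Finally, $[0,1]$ is a compact metric space, hence metrisable, so Theorem \ref{Main theorem} applied to $B=[0,1]$ and the family $\{J_b\}_{b\in[0,1]}$ produces a continuous map $F:[0,1]\times X\to\CC^{2}$ such that $F(b,\cdot):(X,J_b)\to\CC^{2}$ is proper $J_b$-holomorphic for every $b\in[0,1]$, which is exactly the assertion.

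The only nonroutine point is checking that the Beltrami parametrization interacts well with $C^{(k,\alpha)}$-regularity and with the notion of ``continuous family'' appearing in the hypotheses of Theorem \ref{Main theorem}; this is classical (Korn--Lichtenstein, and the $C^{(k,\alpha)}$ regularity theory for the Beltrami equation). An equivalent, coordinate-free way to phrase the first two paragraphs is simply that the space of $C^{(k,\alpha)}$ complex structures on $X$ compatible with the orientation is convex in the Beltrami chart, hence path-connected, so $J_0$ and $J_1$ are joined by such a path, to which Theorem \ref{Main theorem} then applies.
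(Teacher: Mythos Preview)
Your proof is correct and follows the same two-step skeleton as the paper: first build a continuous path $\{J_b\}_{b\in[0,1]}$ of $C^{(k,\alpha)}$ complex structures joining $J_0$ to $J_1$, then apply Theorem~\ref{Main theorem} with $B=[0,1]$. The only difference lies in the convex parametrization used to produce the path. The paper passes through compatible Riemannian metrics: each $J_i$ determines a $J_i$-compatible metric $g_i$ of the same smoothness, the convex combination $g_b=(1-b)g_0+bg_1$ is again a Riemannian metric, and its conformal class yields a complex structure $J_b$ (with a reference to \cite[Lemma~1.9.1]{AlarconForstnericLopez2021}). You instead fix a smooth reference structure $J_{\mathrm{ref}}$ and interpolate linearly in the Beltrami coefficient. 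Both arguments exploit convexity of a natural chart on the space of orientation-compatible almost complex structures, and neither is materially simpler or more general than the other; the metric route avoids introducing an auxiliary $J_{\mathrm{ref}}$, while your Beltrami route makes the $C^{(k,\alpha)}$ continuity of $b\mapsto J_b$ especially transparent since the dependence is affine.

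One small correction: on a noncompact $X$ the global bound $\|\mu_{J_i}\|_{L^\infty(X)}<1$ need not hold---one only has $|\mu_{J_i}(x)|<1$ pointwise. This does not damage your argument, since the pointwise estimate $|\mu_b(x)|\le(1-b)|\mu_0(x)|+b|\mu_1(x)|<1$ already shows each $\mu_b$ is an admissible Beltrami differential, and the continuity of $b\mapsto J_b$ in the sense required by Theorem~\ref{Main theorem} is checked on relatively compact domains anyway.
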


\begin{proof}
Each complex structure determines a compatible Riemannian metric on $X$ of the same smoothness class.
 Convex combinations of these metrics yield a path
connecting the two, which in turn induces a corresponding path of almost complex structures on $X$ of the same smoothness class; 
see, for example \cite[Lemma 1.9.1]{AlarconForstnericLopez2021}.
Then the conclusion follows from Theorem \ref{Main theorem}.
\end{proof}

The main idea in the proof is constructing a convergent sequence of maps on an exhausting
sequence of Runge compact sets of $X$  in a way similar to constructions in \cite{ALJDG,AlarconForstneric2013IM,DDFJMAA}.
In  \cite{ALJDG}, Alarc\'on and L\'opez constructed a proper conformal minimal immersion
from any open Riemann surface $M$ into $\R^3$ with its image in a wedge, 
and in  \cite{AlarconForstneric2013IM}, Alarc\'on and Forstneri\v c  obtained a proper holomorphic immersion from any open Riemann surface $M$ into
$\CC^2$ directed by an Oka cone. 
The main tool in our construction is the Mergelyan approximation theorem for proper families of compact Runge sets recently proven by Forstneri\v c \cite{Forstneric2025}.
When the parameter space $B$ is not compact, one has to deal with nonconstant proper families of compact Runge subsets of $X$,
which are present already in the noncritical case, i.e., when the topology of $X$ is trivial.

\section{Preliminaries}\label{Prelim}

We use the notations $\NN=\{1,2,3,\ldots\}$ and $\ZZ_{+}=\{0,1,2,3,\ldots\}$ for the set of natural numbers,
respectively the set of nonnegative integers. If $K$ is a compact topological space and $f:K\to\CC$
is a continuous function, we denote by $\|f\|_{K}$ the supremum norm of $f$ on $K$.

Throughout the paper, we denote by $X$ a smooth, connected, open, oriented, Hausdorff, second countable surface. We are
interested in families of complex structures on $X$, parametrised by some topological space $B$ as defined in \cite{Forstneric2025}. 
A complex structure on $X$ is given by
a section $J\in\Gamma(\text{End}(TX))$ of the bundle of endomorphisms $\text{End}(TX)$ of the tangent bundle $TX$ of $X$
that satisfies the condition $J^{2}=-\Id$.  We always assume that $J$ induces on $X$ the given orientation of $X$.
Since the tangent bundle $TX$ 
is trivial, the bundle $\text{End}(TX)$ is isomorphic to the trivial bundle $X\times\text{End}(\RR^{2})$. If we choose a trivialisation
of $\text{End}(TX)$, we can identify sections of $\text{End}(TX)$ with functions from $X$ to $\text{End}(\RR^{2})$. If we furthermore
choose a Riemannian metric on $X$, we can define Banach spaces $\Gamma^{(k,\alpha)}(\text{End}(TX)|_{\Omega})$
of sections of $\text{End}(TX)$ of  H\" older class $C^{(k,\alpha)}(\Omega)$
 for any $k\in\ZZ_{+}$, $0<\alpha<1$ and any relatively compact domain $\Omega\subset X$.
A complex structure $J\in\Gamma(\text{End}(TX))$ is locally of class $C^{(k,\alpha)}$ if $J|_{\Omega}\in\Gamma^{(k,\alpha)}(\text{End}(TX)|_{\Omega})$
for every relatively compact domain $\Omega\subset X$.

\begin{definition}
Let $B$ be a topological space, $k\in\ZZ_{+}$ and $0<\alpha<1$. A \textbf{continuous
family of complex structures on $X$} of class $C^{(k,\alpha)}$, parametrised by $B$, 
is a family of complex structures $J=\{J_{b}\}_{b\in B}$ on $X$, which are locally of class $C^{(k,\alpha)}$, such that
for every relatively compact domain $\Omega\subset X$ the map $b\mapsto J_{b}|_{\Omega}\in\Gamma^{(k,\alpha)}(\text{End}(TX)|_{\Omega})$
is continuous.
\end{definition}

A continuous family $J=(J_{b})_{b\in B}$ of complex structures on $X$ furnishes us with a Riemann surface $(X,J_{b})$ for every $b\in B$.
A function $f:X\to\CC$ is \textbf{$J_{b}$-holomorphic} if it is holomorphic with respect to the complex structure $J_{b}$ on $X$ and the
standard complex structure on $\CC$.

Let $B$ be a topological space and let $A$ be a subset of $B\times X$. For every $b\in B$ we denote 
\[
A_{b}=\{x\in X\,:\,(b,x)\in A\}.
\]
If $f:A\to\CC$ is a function and $b\in B$, we denote by $f_{b}:A_{b}\to\CC$ the function,
given by
\[
f_{b}(x)=f(b,x)
\]
for $x\in A_{b}$. We are interested in continuous families of holomorphic functions.

\begin{definition}
Let $B$ be a topological space, $k\in\ZZ_{+}$, $0<\alpha<1$ and let $J=(J_{b})_{b\in B}$ be a 
continuous family of complex structures on $X$ of class $C^{(k,\alpha)}$, parametrised by $B$.

(1) Let $U\subset B\times X$ be an open subset. A continuous function $f:U\to\CC$ is \textbf{$J$-holomorphic}, if the function $f_{b}:U_{b}\to\CC$
is $J_{b}$-holomorphic for every $b\in B$. The vector space of all $J$-holomorphic functions on $U$ is denoted by
$\cO_{J}(U)$. We similarly define a $J$-holomorphic map $f:U\to M$, where $M$ is a complex manifold.

(2) Now let $Z\subset B\times X$ be a closed subset. A continuous function $f:Z\to\CC$ is \textbf{$J$-holomorphic} if there exist
an open set $U\subset B\times X$, containing $Z$, and $\tilde{f}\in\cO_{J}(U)$ such that $\tilde{f}|_{Z}=f$. 
The vector space of all $J$-holomorphic functions on $Z$ is denoted by $\cO_{J}(Z)$.
The vector space of all continuous functions $f:Z\to\CC$, for which the function $f_{b}:\Int(Z_{b})\to\CC$ is
$J_{b}$-holomorphic for every $b\in B$, is denoted by $\cA_{J}(Z)$.
\end{definition}

For our construction, we need to consider continuous functions on proper families of compact subsets of $X$, which we recall below.

\begin{definition} 
Let $B$ be a topological space and 
let $\pi:B\times X\to B$ be the projection onto the first factor. A \textbf{family of compact subsets of $X$}, parametrised by $B$, is given by a closed
subset $K\subset B\times X$, for which $K_{b}$ is a compact subset of $X$ for every $b\in B$ (note that $K_{b}$ may be empty). A family of
compact subsets $K$ is \textbf{proper} if the map $\pi|_{K}:K\to B$ is proper, it is \textbf{wide} if $K_{b}$ is non-empty
for every $b\in B$, and it is called \textbf{Runge}  if $K_{b}$ is Runge for every $b\in B$.
\end{definition}

Recall that a continuous map between topological spaces is proper if the preimage of every compact subset is compact, and
that a compact subset $K\subset X$ is Runge if the complement $X\setminus K$ has no relatively compact connected components. A Runge compact set $K$ is 
holomorphically convex in every complex structure on $X$.

As noted in \cite{Forstneric2025}, we have the following characterization of proper families of compact subsets:

\begin{proposition}
Let $B$ be a Hausdorff topological space and let $K\subset B\times X$ be a closed subset. Then $K$ is a proper family of compact subsets of $X$ if and only if 
the following two conditions hold:

(1) For every $b\in B$ the fiber $K_{b}$ is compact,

(2) For every $b_{0}\in B$ and every open subset $U\subset X$ containing $K_{b_{0}}$ there is
a neighbourhood $B_{0}$ of $b_{0}$ in $B$ such that $K_{b}\subset U$ for every $b\in B_{0}$. 
\end{proposition}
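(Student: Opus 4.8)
Since a family of compact subsets of $X$ is by definition a closed set $K\subseteq B\times X$ all of whose fibres $K_b$ are compact, condition (1) is part of the very definition, and $K$ being a \emph{proper} family means precisely that $\pi|_K\colon K\to B$ is a proper map. Hence the content of the proposition is the equivalence, for a closed set $K\subseteq B\times X$, of properness of $\pi|_K$ with the conjunction of (1) and (2). I would prove the two implications separately, using only that $X$, being a second countable surface, is a locally compact, Hausdorff, metrisable space.

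\emph{Conditions (1) and (2) imply that $\pi|_K$ is proper.} Let $L\subseteq B$ be compact; I must show $(\pi|_K)^{-1}(L)=K\cap(L\times X)$ is compact. For each $b\in L$ the fibre $K_b$ is compact by (1), so local compactness of $X$ yields an open set $U_b\subseteq X$ with $K_b\subseteq U_b$ and $\overline{U_b}$ compact, and by (2) there is an open neighbourhood $B_b$ of $b$ in $B$ with $K_{b'}\subseteq U_b$ for all $b'\in B_b$. Finitely many of the $B_b$ cover $L$, say $B_{b_1},\dots,B_{b_m}$; then $K\cap(L\times X)$ is contained in the compact set $Q:=L\times\bigl(\overline{U_{b_1}}\cup\cdots\cup\overline{U_{b_m}}\bigr)$, because any $(b,x)\in K\cap(L\times X)$ has $b\in B_{b_i}$ for some $i$, whence $x\in K_b\subseteq U_{b_i}$. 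Since $K$ is closed in $B\times X$, the set $K\cap(L\times X)=K\cap Q$ is closed in $Q$, hence compact. (No separation hypothesis on $B$ enters here.)

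\emph{Properness of $\pi|_K$ implies (1) and (2).} Each fibre $K_b=(\pi|_K)^{-1}(\{b\})$ is the preimage of a compact set, hence compact, giving (1). For (2), I would argue by contradiction: if (2) fails there are $b_0\in B$ and an open $U\subseteq X$ with $K_{b_0}\subseteq U$ such that no neighbourhood of $b_0$ is carried into $U$. Assuming $B$ first countable (which covers the metrisable $B$ of the main theorems), choose a decreasing neighbourhood basis $V_1\supseteq V_2\supseteq\cdots$ of $b_0$ and, for each $n$, points $b_n\in V_n$ with $K_{b_n}\not\subseteq U$ and $x_n\in K_{b_n}\setminus U$. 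Then $b_n\to b_0$, so $L:=\{b_n:n\in\NN\}\cup\{b_0\}$ is compact, and by properness $K\cap(L\times X)=(\pi|_K)^{-1}(L)$ is compact; its image under the projection to $X$ is then a compact set containing every $x_n$, so $\{x_n\}$ is relatively compact in $X$. After passing to a subsequence, $x_{n_k}\to x^{*}$ in $X$, hence $(b_{n_k},x_{n_k})\to(b_0,x^{*})$ in $B\times X$; as $K$ is closed this forces $(b_0,x^{*})\in K$, i.e.\ $x^{*}\in K_{b_0}\subseteq U$, contradicting $x_{n_k}\in X\setminus U$ (a closed set) and $x_{n_k}\to x^{*}$.

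The part needing the most thought is this last implication, and specifically the role of properness in it: properness is exactly what prevents the fibres $K_b$ for $b$ near $b_0$ from escaping to infinity in $X$, which is what one would need in order to violate (2). The only point requiring extra care for a general Hausdorff $B$ is the passage from the failure of (2) to an honest sequence $b_n\to b_0$; in that generality one takes instead the net $(b_V)$ indexed by the neighbourhoods of $b_0$, works in the one-point compactification $X\cup\{\infty\}$ of $X$ (compact and metrisable since $X$ is second countable and locally compact), and extracts a subnet with $x_{V_\mu}\to p\in X\cup\{\infty\}$; if $p\in X$ one contradicts closedness of $K$ exactly as above, and if $p=\infty$ one contradicts properness, the latter being the step where the topology of $B$ must be handled with care (and is automatic once $B$ is first countable or locally compact).
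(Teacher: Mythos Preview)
The paper does not actually prove this proposition: it is stated with the preface ``As noted in \cite{Forstneric2025}'' and no argument is given. So there is no proof in the paper to compare against, and your write-up stands on its own merits.

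Your argument is correct and complete for first countable $B$, and in particular for the metrisable $B$ that appears in all the main results of the paper. The forward direction (conditions (1) and (2) imply properness of $\pi|_K$) is clean and works for arbitrary $B$, as you note: the key observation that $K\cap(L\times X)=K\cap Q$ with $Q$ compact, together with closedness of $K$, avoids any appeal to Hausdorffness of $B$. The backward direction via sequences is also correct for first countable $B$.

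The one soft spot is the net argument you sketch for general Hausdorff $B$. In the case $x_{V_\mu}\to\infty$ you say ``one contradicts properness,'' but this step is not as automatic as the sentence suggests: unlike a convergent sequence, the range of a convergent net together with its limit need not be compact, so you cannot simply feed $\{b_{V_\mu}\}\cup\{b_0\}$ into the definition of properness. You are right that local compactness of $B$ rescues this (take a compact neighbourhood $N$ of $b_0$; eventually $b_{V_\mu}\in N$, so $(b_{V_\mu},x_{V_\mu})$ lies in the compact set $(\pi|_K)^{-1}(N)$, whose projection to $X$ is compact, contradicting $x_{V_\mu}\to\infty$), and you flag honestly that this is ``the step where the topology of $B$ must be handled with care.'' For the fully general Hausdorff case one would need either a different argument or an appeal to the cited reference; since the paper itself defers to \cite{Forstneric2025} here, and since every application in the paper has $B$ metrisable, your treatment is entirely adequate for the paper's purposes.
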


Let us now take a look at some examples.

\begin{example}\rm

(1) For every compact subset $K_{0}\subset X$ we have the constant family $K=B\times K_{0}$ of compact subsets of $X$ 
for which $K_{b}=K_{0}$ for every $b\in B$. More generally, 
let $K\subset B\times X$ be a closed subset for which $\pi|_{K}:K\to B$ is a fiber bundle with a compact fiber. Then $K$
is a proper family of compact subsets of $X$.

(2) A proper family of compact subsets of $X$ need not have all fibers homeomorphic. As an example, consider the case when $B=\RR$, $X=\CC=\RR^{2}$ and
denote by $\overline{\DD}\subset\CC$ the closed unit disk. The set
\[
K=\left((-\infty,0]\times\overline{\DD}\right)\cup\left([0,\infty)\times\{0\}\right)
\]
is then a proper family of compact subsets of $X$. On the other hand, let us define the set
\[
\tilde{K}=\left((-\infty,0]\times\{0\}\right)\cup\left\{(x,\tfrac{1}{x})\,|\,x\in(0,\infty)\right\}.
\]
The set $\tilde{K}$ defines a family of compact subsets of $X$ which is not a proper family.
\end{example}

To show that a given set is a proper family of compact subsets of $X$ we easily obtain the following useful criteria.

\begin{proposition}\label{Proposition Proper families}
Let $B$ be a topological space.
\begin{enumerate}
\item Let $K\subset B\times X$ be a proper family of compact subsets of $X$ and let $K'$ be a closed subset of $K$. Then $K'$ is
a proper family of compact subsets of $X$ as well.

\item Let $K_{1},K_{2}\subset B\times X$ be proper families of compact subsets of $X$. Then $K_{1}\cup K_{2}$ is
a proper family of compact subsets as well.
\end{enumerate}
\end{proposition}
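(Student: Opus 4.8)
The plan is to verify the two characterising conditions of Proposition~\ref{Proposition Proper families}'s predecessor (the characterisation of proper families) in each case, or, more directly, to argue from the definition that $\pi|_{K'}$, respectively $\pi|_{K_1\cup K_2}$, is proper. For part (1), the key observation is that a closed subset $K'$ of a closed set $K\subset B\times X$ is itself closed in $B\times X$, and each fiber $K'_b=K'\cap(\{b\}\times X)$ is a closed subset of the compact set $K_b$, hence compact. Properness is then immediate: for a compact $L\subset B$, one has $(\pi|_{K'})^{-1}(L)=K'\cap(\pi|_K)^{-1}(L)$, which is a closed subset of the compact set $(\pi|_K)^{-1}(L)$ (compact because $K$ is a proper family), hence compact. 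So $\pi|_{K'}$ is proper and $K'$ is a proper family of compact subsets of $X$.

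For part (2), I would first note that $K_1\cup K_2$ is closed in $B\times X$ as a finite union of closed sets, and that $(K_1\cup K_2)_b=(K_1)_b\cup(K_2)_b$ is a union of two compact sets, hence compact, for every $b\in B$. For properness, given a compact $L\subset B$, we have
\[
(\pi|_{K_1\cup K_2})^{-1}(L)=\bigl((\pi|_{K_1})^{-1}(L)\bigr)\cup\bigl((\pi|_{K_2})^{-1}(L)\bigr),
\]
and each set on the right is compact since $K_1$ and $K_2$ are proper families; a finite union of compact sets is compact, so the left-hand side is compact. Hence $\pi|_{K_1\cup K_2}$ is proper and $K_1\cup K_2$ is a proper family of compact subsets of $X$.

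I do not expect any genuine obstacle here; the statement is a routine consequence of elementary point-set topology (closed subsets of compact sets are compact, finite unions of compact sets are compact, preimages decompose over unions and restrict well along inclusions). The only mild subtlety is to be careful that "closed subset of $K$" means closed in the subspace topology of $K$, which for $K$ closed in $B\times X$ is the same as closed in $B\times X$; once this is noted, everything goes through. An alternative, perhaps slightly slicker, presentation would invoke the preceding characterisation proposition: condition~(1) there (compactness of fibers) was checked above in both cases, and condition~(2) (for every $b_0$ and every open $U\supset K_{b_0}$ there is a neighbourhood $B_0$ of $b_0$ with $K_b\subset U$ for $b\in B_0$) is inherited by a closed subset since $K'_{b_0}\subset K_{b_0}\subset U$, and holds for a union because if $U\supset(K_1)_{b_0}\cup(K_2)_{b_0}$ then shrinking to $B_0=B_1\cap B_2$ with $B_i$ chosen for $K_i$ gives $(K_1\cup K_2)_b\subset U$ for $b\in B_0$. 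Either route yields the claim with essentially no computation.
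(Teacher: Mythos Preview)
Your proof is correct. The paper itself omits the proof entirely, introducing the proposition with the phrase ``we easily obtain the following useful criteria'' and leaving the verification to the reader; your argument is precisely the routine point-set topology check the authors had in mind.
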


Let $K\subset B\times X$ be a wide, proper family of compact subsets of $X$ and
let $\eta:K\to(0,\infty)$ be a positive continuous function. Since $K_{b}$ is non-empty for every $b\in B$, there exists the minimum
\[
\min(\eta)(b)=\min\{\eta(b,x)\,:\,x\in K_{b}\}>0.
\]
We thus obtain a function $\min(\eta):B\to(0,\infty)$, which is continuous if $K$ is a constant or a locally trivial family. In general, however,
the function $\min(\eta)$ is only lower semicontinuous, but we can always find a continuous minorant $m(\eta):B\to(0,\infty)$ of $\min(\eta)$:

\begin{proposition}\label{Proposition minimal function}
Let $B$ be a metrisable space, $K\subset B\times X$ a wide, proper family of compact subsets of $X$ 
and let $\eta:K\to(0,\infty)$ be a continuous function.
\begin{enumerate}
\item The function $\min(\eta):B\to(0,\infty)$ is lower semicontinuous.

\item There exists a continuous function $m(\eta):B\to(0,\infty)$, such that $m(\eta)(b)<\min(\eta)(b)$ holds for every $b\in B$.
If $B$ is a smooth manifold, we can in addition ensure that the function $m$ is smooth.
\end{enumerate}
\end{proposition}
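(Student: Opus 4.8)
The plan is to obtain part~(1) directly from the properness of $K$, via the characterization of proper families of compact subsets recalled above, and then to deduce part~(2) from part~(1) by a partition-of-unity construction, using that a metrisable space is paracompact.

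For part~(1), I would fix $b_0\in B$ together with a number $c$ satisfying $0<c<\min(\eta)(b_0)$, and show that $\min(\eta)>c$ on an entire neighbourhood of $b_0$; since $c$ is arbitrary, this means every superlevel set $\{b\in B:\min(\eta)(b)>c\}$ is open, which is lower semicontinuity. Consider
\[
C=\{(b,x)\in K:\eta(b,x)\le c\}=\eta^{-1}\big((0,c]\big).
\]
This set is closed in $K$, hence by Proposition~\ref{Proposition Proper families}(1) it is itself a proper family of compact subsets of $X$. By the choice of $c$ we have $\eta(b_0,x)>c$ for every $x\in K_{b_0}$, so the fibre $C_{b_0}$ is empty. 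Applying the characterization of proper families to $C$, to $b_0$, and to the open set $U=\emptyset$, which contains $C_{b_0}$, produces a neighbourhood $B_0$ of $b_0$ with $C_b=\emptyset$ for all $b\in B_0$; for such $b$ every value of the continuous function $\eta$ on the nonempty compact set $K_b$ exceeds $c$, so the attained minimum $\min(\eta)(b)$ exceeds $c$ as well. Finally $\min(\eta)$ is $(0,\infty)$-valued because $K$ is wide and $\eta>0$.

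For part~(2), put $g=\min(\eta)$, which is lower semicontinuous by part~(1). For each $b\in B$ lower semicontinuity supplies an open neighbourhood $V_b$ of $b$ on which $g>g(b)/2$. Since $B$ is metrisable it is paracompact, so I would take a locally finite open refinement $\{W_i\}_{i\in I}$ of the cover $\{V_b\}_{b\in B}$, choose for each $i$ a point $\beta(i)$ with $W_i\subset V_{\beta(i)}$, and set $c_i=g(\beta(i))/2>0$, so that $g>c_i$ on $W_i$. Let $\{\phi_i\}_{i\in I}$ be a continuous partition of unity with $\supp\phi_i\subset W_i$, and define
\[
m(\eta)=\sum_{i\in I}\phi_i\,c_i .
\]
Local finiteness makes $m(\eta)$ continuous, and $m(\eta)(b)>0$ at every $b$ because some $\phi_i(b)$ is positive. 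For fixed $b$, each index $i$ with $\phi_i(b)>0$ satisfies $b\in W_i$, hence $c_i<g(b)$; since the (finitely many nonzero) weights $\phi_i(b)$ are nonnegative and sum to $1$, the resulting convex combination obeys $m(\eta)(b)=\sum_i\phi_i(b)c_i<g(b)=\min(\eta)(b)$. If $B$ is a smooth manifold, choosing the $\phi_i$ smooth makes $m(\eta)$, a locally finite sum of smooth functions times constants, smooth.

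The only genuinely delicate point is part~(1): one has to turn properness of $K$ into openness of the superlevel sets of $\min(\eta)$, and in doing so handle the degenerate fibre $C_{b_0}=\emptyset$ — which is exactly where properness of $\pi|_K$, rather than mere fibrewise compactness, enters. Part~(2) is then a standard paracompactness argument, and the smooth refinement costs nothing since the $c_i$ are constants.
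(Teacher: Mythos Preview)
Your proof is correct. Part~(2) is essentially identical to the paper's construction: both pick, for each point of $B$, a neighbourhood on which $\min(\eta)$ is bounded below by a positive constant, refine to a locally finite cover by paracompactness, and form the convex combination via a subordinate partition of unity (smooth if $B$ is a manifold).

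For part~(1) the paper proceeds differently: it argues by contradiction with sequences. Fixing $b_0$ and $\epsilon>0$, if no neighbourhood satisfies $\min(\eta)>\min(\eta)(b_0)-\epsilon$, one picks $(b_n,x_n)\in K$ with $b_n\to b_0$ and $\eta(b_n,x_n)\le\min(\eta)(b_0)-\epsilon$; the set $\{b_n:n\ge 0\}$ is compact, so properness of $\pi|_K$ makes its preimage compact, a convergent subsequence has limit $(b_0,x_0)\in K$, and continuity of $\eta$ yields $\min(\eta)(b_0)\le\eta(b_0,x_0)\le\min(\eta)(b_0)-\epsilon$. Your route---forming the closed sublevel set $C=\eta^{-1}((0,c])$, noting it is a proper family with $C_{b_0}=\emptyset$, and invoking the characterization of proper families with $U=\emptyset$---is a clean sequence-free repackaging of the same mechanism. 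It has the minor advantage that it uses only Hausdorffness of $B$ (needed for the characterization proposition), whereas the paper's sequential extraction implicitly relies on metrisability of $B\times X$ to pass from compactness to sequential compactness.
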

\begin{proof} 
(a) Let $\epsilon>0$ and $b_{0}\in B$. We have to prove that 
there exists an open neighbourhood $U_{b_{0}}$ of $b_{0}$ in $B$ such that $\min(\eta)(b)>\min(\eta)(b_{0})-\epsilon$ for every $b\in U_{b_{0}}$.
Suppose, on the contrary, that such a neighbourhood does not exist for some $b_{0}$. Then there exists a sequence $(b_{n},x_{n})$
of points in $K$ such that $\eta(b_{n},x_{n})\leq \min(\eta)(b_{0})-\epsilon$ for every $n\in\NN$ and $\lim\limits_{n\to\infty}b_{n}=b_{0}$. The set 
$L=\{b_{n}\,|\,n\in\ZZ_{+}\}$ is a compact subset of $B$, hence $(\pi|_{K})^{-1}(L)$ is a compact subset of $K$. We may therefore 
assume, that the sequence $(b_{n},x_{n})$ is convergent with limit $(b_{0},x_{0})\in K$. But then we have
\[
\min(\eta)(b_{0})\leq \eta(b_{0},x_{0})\leq\min(\eta)(b_{0})-\epsilon,
\]
which leads us to a contradiction.

(b) We have shown that for every $b_{0}\in B$ we can find a neighbourhood $U_{b_{0}}$ of $b_{0}$ in $B$ and a number
$\epsilon_{b_{0}}>0$ such that $\epsilon_{b_{0}}<\min(\eta)(b)$ for every $b\in U_{b_{0}}$. Since $B$ is paracompact,
we can find a subset $B'\subset B$ and for every $b\in B'$ an open subset $V_{b}\subset U_{b}$ such
that $\{V_{b}\}_{b\in B'}$ is a locally finite open cover of $B$. Choose a continuous partition of unity $\{\rho_{b}\}_{b\in B'}$,
subordinated to the cover $\{V_{b}\}_{b\in B'}$. The function $m(\eta):B\to(0,\infty)$, defined by
\[
m(\eta)=\sum_{b\in B'}\epsilon_{b}\rho_{b}
\]
is then continuous and satisfies $0<m(\eta)(b)<\min(\eta)(b)$ for every $b\in B$.
\end{proof}

Classical
versions of Runge and Mergelyan approximation theorems show us that we can approximate a holomorphic function on a Runge compact set $K$ arbitrarily
closely on $K$ with global holomorphic functions on $X$. 
In the construction of families of proper holomorphic maps,
we use the following Mergelyan theorem for proper families of compact Runge sets (see Corollary $5.2$ and Remark $5.7$
in \cite{Forstneric2025}).

\begin{theorem}[Mergelyan theorem for proper families of Runge compacts] \label{Theorem Mergelyan}
Let $B$ be a paracompact Hausdorff space, $k\in\ZZ_{+}$, $0<\alpha<1$ and let $J=\{J_{b}\}_{b\in B}$
be a continuous family of complex structures on $X$ of class $C^{(k,\alpha)}$, parametrised by $B$.
Let $K\subset B\times X$ be a proper family of Runge compacts in $X$ and let $\epsilon:B\to(0,\infty)$ be a continuous function.
Then for every $f\in \cA_{J}(K)$ there exists a function $F\in\cO_{J}(B\times X)$ such that 
$\|F_{b}-f_{b}\|_{K_{b}}<\epsilon(b)$ for every $b\in B$.
\end{theorem}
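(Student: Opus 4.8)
The statement to be proved is the Mergelyan theorem for proper families of Runge compacts, Theorem \ref{Theorem Mergelyan}, which is quoted from \cite{Forstneric2025}. Since the paper itself cites this as an external input (Corollary 5.2 and Remark 5.7 in \cite{Forstneric2025}), I will sketch the structure one would expect the proof to take, combining a parametric version of the classical Mergelyan theorem on a single Riemann surface with the bookkeeping needed to handle a nonconstant proper family of compacts over the paracompact base $B$.

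The plan is to proceed in three stages. First, I would reduce to a local statement over the base: using paracompactness of $B$, choose a locally finite open cover $\{V_i\}_{i\in I}$ of $B$ together with a subordinate partition of unity $\{\chi_i\}$, where each $V_i$ is small enough that, by condition (2) in the characterisation of proper families (the Proposition following the definition of proper families), there is a fixed relatively compact domain $\Omega_i\subset X$ with $K_b\subset\Omega_i$ for all $b\in V_i$. On $V_i$ the family of complex structures $\{J_b\}_{b\in V_i}$ varies continuously in $\Gamma^{(k,\alpha)}(\mathrm{End}(TX)|_{\Omega_i})$, so over $V_i$ we are in the situation of a parametric Mergelyan problem with a \emph{bounded} family of compacts sitting inside a fixed domain. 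Second, over each $V_i$ I would invoke the parametric Mergelyan/Runge approximation theorem for a continuously varying family of complex structures (the non-proper, ``bounded'' case, which is the technical heart of \cite{Forstneric2025}, based on solving a parametric $\dibar$-equation with Hölder estimates and gluing via a parametric version of the classical scheme on $X$): given $f\in\cA_J(K)$, restricted to $\pi^{-1}(V_i)\cap K$, and a prescribed error function, produce $G^i\in\cO_J(V_i\times X)$ approximating $f$ to within the prescribed tolerance uniformly in $b\in V_i$ on $K_b$. Here one uses that $K_b$ is Runge, hence $\cO(X,J_b)$-convex in each structure, so the classical Runge theorem applies fiberwise and the parametric refinement upgrades this to continuity in $b$.

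Third, I would patch the local solutions $G^i$ into a single global $F\in\cO_J(B\times X)$. The naive convex combination $\sum_i \chi_i(b) G^i(b,\cdot)$ is not $J_b$-holomorphic because the $\chi_i$ depend on $b$, not on $x$, so in fact fiberwise it \emph{is} a convex combination of $J_b$-holomorphic functions and hence $J_b$-holomorphic; the point is that it is still continuous in $(b,x)$ and, on $K_b$, differs from $f_b$ by at most $\sum_i\chi_i(b)\,\|G^i_b-f_b\|_{K_b}$, which can be made $<\epsilon(b)$ by choosing the local tolerances on $V_i$ to be a continuous function strictly below $\epsilon$ on the closure of $V_i$ — here Proposition \ref{Proposition minimal function} (or rather the elementary fact that a positive continuous function on $B$ admits a locally finite refinement bounding it from below) provides the needed continuous majorants. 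Thus $F=\sum_i\chi_i G^i$ lies in $\cO_J(B\times X)$ and satisfies $\|F_b-f_b\|_{K_b}<\epsilon(b)$ for all $b$. One must also check that the $G^i$ can be taken to agree, up to controlled error, on overlaps $V_i\cap V_j$, or more simply bypass this by noting that since each $G^i_b$ approximates the \emph{same} $f_b$ on $K_b\cap(\text{common domain})$, no compatibility beyond the tolerance bound is needed for the convex combination argument.

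The main obstacle is the second stage: proving the parametric Runge/Mergelyan approximation over a single $V_i$ with a continuously varying family of $C^{(k,\alpha)}$ complex structures. Fiberwise this is classical, but obtaining a \emph{single continuous-in-}$b$ approximant requires a parametric solution of the $\dibar_{J_b}$-equation with estimates uniform and continuous in $b$ (so that the Cauchy–Green-type integral operators depend continuously on the structure), together with a parametric version of the pole-pushing/Runge exhaustion argument on the fixed domain $\Omega_i$. This is exactly the content that \cite{Forstneric2025} establishes; in a self-contained write-up one would cite the parametric $\dibar$-estimates there and the continuity of the Mergelyan construction with respect to the complex structure, then assemble as above. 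The properness of the family enters only in the reduction step, ensuring the cover $\{V_i\}$ with fixed ambient domains $\Omega_i$ exists.
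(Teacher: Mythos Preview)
The paper does not prove Theorem~\ref{Theorem Mergelyan}; it is imported verbatim from \cite{Forstneric2025} (Corollary~5.2 and Remark~5.7 there) and used as a black box. There is therefore no proof in the present paper to compare your proposal against, and you correctly identified this at the outset.

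As for the sketch itself: the architecture you outline---localise in $B$ via paracompactness so that the family sits in a fixed relatively compact $\Omega_i\subset X$, invoke a parametric Mergelyan over each $V_i$, then patch by $F=\sum_i\chi_i G^i$---is sound, and your observation that this sum is fibrewise $J_b$-holomorphic because the $\chi_i$ depend only on $b$ is exactly the point that makes the gluing step elementary (no $\dibar$-correction needed at this stage). The error estimate $\|F_b-f_b\|_{K_b}\le\sum_i\chi_i(b)\|G^i_b-f_b\|_{K_b}<\epsilon(b)$ follows immediately from convexity. The genuine content, as you say, is the parametric Mergelyan over a single $V_i$ with complex structures varying continuously in $\Gamma^{(k,\alpha)}(\mathrm{End}(TX)|_{\Omega_i})$, and that is precisely what \cite{Forstneric2025} supplies through a parametric $\dibar$-analysis; your proposal defers to that source at the right place. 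One small clarification: the local step must produce $G^i\in\cO_J(V_i\times X)$, i.e.\ functions holomorphic on \emph{all} of $X$ for each $b\in V_i$, not merely on $\Omega_i$; the role of $\Omega_i$ is only to give a Banach-space framework for the continuous dependence of $J_b$, while the Runge property of $K_b$ in $X$ is what allows the approximant to be global.
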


In our construction, we need the following combination of the Mergelyan theorem and Proposition \ref{Proposition minimal function}.

\begin{proposition}\label{Proposition Mergelyan approximation with bounds}
Let $B$ be a metrisable space, $k\in\ZZ_{+}$, $0<\alpha<1$ and let $J=\{J_{b}\}_{b\in B}$
be a continuous family of complex structures on $X$ of class $C^{(k,\alpha)}$, parametrised by $B$. Let $n\in\NN$
and let $K_{1},K_{2},\ldots,K_{n}\subset B\times X$ be wide, proper families of compact subsets of $X$ such that
their union is contained in a proper family $K$ of Runge compacts in $X$. Suppose $f\in \cA_{J}(K)$
is a function that satisfies conditions $\Re f>C_{i}$ on $K_{i}$ for some
positive constants $C_{i}$ for $1\leq i\leq n$. Then for every continuous function $\epsilon:B\to(0,\infty)$ 
there exists a function $F\in\cO_{J}(B\times X)$ such that $\Re F>C_{i}$ on $K_{i}$ for $1\leq i\leq n$
and $\|F_{b}-f_{b}\|_{K_{b}}<\epsilon(b)$ for every $b\in B$.
\end{proposition}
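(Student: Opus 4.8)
The plan is to deduce this from the Mergelyan theorem for proper families (Theorem \ref{Theorem Mergelyan}) by producing a continuous, strictly positive error bound $\delta\le\epsilon$ that is small enough to force the approximant to inherit the inequalities $\Re F>C_i$ on $K_i$. The one point that needs care is that the obvious candidate bound, $b\mapsto\min\{\Re f_b(x)-C_i:x\in (K_i)_b\}$, is in general only lower semicontinuous, so it cannot be fed directly into Theorem \ref{Theorem Mergelyan}; this is exactly the defect repaired by Proposition \ref{Proposition minimal function}, and invoking it is the crux of the argument.

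In detail, I would first observe that since $\bigcup_{i=1}^n K_i\subset K$ and $f\in\cA_J(K)$, the function $f$ is continuous on each $K_i$, so $\eta_i:=\Re f-C_i$ is a continuous function $\eta_i:K_i\to(0,\infty)$, positive by hypothesis. Each $K_i$ is a wide, proper family of compact subsets of $X$, so Proposition \ref{Proposition minimal function} yields a continuous function $m_i:=m(\eta_i):B\to(0,\infty)$ with $m_i(b)<\min(\eta_i)(b)=\min\{\Re f_b(x)-C_i:x\in(K_i)_b\}$ for every $b\in B$ (the minimum exists precisely because $K_i$ is wide). Then set $\delta:=\min\{\epsilon,m_1,\dots,m_n\}:B\to(0,\infty)$, which is continuous and everywhere positive, being a minimum of finitely many such functions. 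Since $B$ is metrisable, hence paracompact Hausdorff, Theorem \ref{Theorem Mergelyan} applies to the proper family $K$ of Runge compacts and the bound $\delta$, giving $F\in\cO_J(B\times X)$ with $\|F_b-f_b\|_{K_b}<\delta(b)\le\epsilon(b)$ for every $b\in B$; this is the required approximation.

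It remains to check $\Re F>C_i$ on $K_i$. Fix $i$ and $(b,x)\in K_i$. As $K_i\subset K$ we have $x\in K_b$, so $|F_b(x)-f_b(x)|<\delta(b)\le m_i(b)<\min(\eta_i)(b)\le\eta_i(b,x)=\Re f_b(x)-C_i$, and therefore
\[
\Re F_b(x)\ \ge\ \Re f_b(x)-|F_b(x)-f_b(x)|\ >\ \Re f_b(x)-\bigl(\Re f_b(x)-C_i\bigr)\ =\ C_i .
\]
This gives the claim. I do not expect any serious obstacle here: once Proposition \ref{Proposition minimal function} is available the proof is essentially bookkeeping, and all the analytic substance lives in that proposition and in Theorem \ref{Theorem Mergelyan}.
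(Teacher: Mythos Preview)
Your proof is correct and follows essentially the same route as the paper: both apply Proposition~\ref{Proposition minimal function} to each $\eta_i=\Re f-C_i$ on the wide proper family $K_i$ to obtain continuous minorants, take the minimum of these with $\epsilon$, and feed the resulting positive continuous bound into Theorem~\ref{Theorem Mergelyan}. Your write-up is in fact slightly more explicit than the paper's in verifying the final inequality $\Re F>C_i$.
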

\begin{proof} 
For $1\leq i\leq n$ the function $\Re f-C_{i}$ is continuous and positive on $K_{i}$. By Proposition \ref{Proposition minimal function}, there
exist continuous functions $\delta_{i}:B\to(0,\infty)$ such that for every $(b,x)\in K_{i}$ we have $\Re f(b,x)-\delta_{i}(b)>C_{i}$.
Now define a continuous function $\delta=\min\{\delta_{1},\delta_{2},\ldots,\delta_{n},\epsilon\}:B\to(0,\infty)$. 
From Mergelyan's theorem, it follows that there exists a function
$F\in\cO_{J}(B\times X)$ such that $\|F_{b}-f_{b}\|_{K_{b}}<\delta(b)$ for every $b\in B$. This function $F$ satisfies the conditions.
\end{proof}

\section{Construction of families of proper holomorphic maps}\label{sec:proof}

We first recall how we can reduce the construction of a family of proper holomorphic maps to the construction of a converging
sequence on an exhausting family of compact sets in $X$.

\begin{proposition}\label{Proposition sequence of functions}
Let $B$ be a topological space, $k\in\ZZ_{+}$, $0<\alpha<1$ and let $J=\{J_{b}\}_{b\in B}$
be a continuous family of complex structures on $X$ of class $C^{(k,\alpha)}$, parametrised by $B$.
Let $\emptyset=K_{0}\subset K_{1}\subset K_{2}\subset K_{3}\subset\ldots$ be an exhaustion of 
$X$ by compact sets such that $K_{n}\subset\Int K_{n+1}$ for every $n\in\NN$. 
Suppose that for every $n\in\ZZ_{+}$ we have functions $F_{n,1},F_{n,2}\in \cA_{J}(B\times K_{n})$, such that for every $n\in\NN$ it holds:
\begin{enumerate}
\item [$(a)_{n}$] $|F_{n,i}(b,x)-F_{n-1,i}(b,x)|<\frac{1}{2^{n-1}}$ for every $(b,x)\in{B\times K_{n-1}}$ and $i=1,2$,

\item [$(b)_{n}$] $\max\{\Re F_{n,1}(b,x),\Re F_{n,2}(b,x)\}>n-1$ for every $(b,x)\in B\times(K_{n}\setminus\Int K_{n-1})$.
\end{enumerate}
Then there exist functions $F_{1},F_{2}\in\cO_{J}(B\times X)$ such that $F=(F_{1},F_{2}):B\times X\to\CC^{2}$ is
a continuous $J$-holomorphic map, for which $F_{b}:X\to\CC^{2}$ is a
proper map for every $b\in B$.
\end{proposition}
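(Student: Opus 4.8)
The strategy is the classical exhaustion argument, now carried out uniformly in the parameter $b\in B$. Conditions $(a)_n$ give a Cauchy condition for the sequences $(F_{n,i})_n$ on every compact piece $B\times K_m$, so I would first show that for each $i=1,2$ the sequence $F_{n,i}$ converges uniformly on $B\times K_m$ for every $m$, and hence converges to a continuous limit $F_i\colon B\times X\to\CC$. Indeed, for $n>m$ the telescoping estimate from $(a)_n$ gives $|F_{n,i}(b,x)-F_{m,i}(b,x)|<\sum_{j>m}2^{-(j-1)}=2^{-(m-1)}$ on $B\times K_m$, so the limit exists and satisfies
\[
|F_i(b,x)-F_{m,i}(b,x)|\le \frac{1}{2^{m-1}}\qquad\text{for all }(b,x)\in B\times K_m.
\]
Since each $F_{n,i}\in\cA_J(B\times K_m)$ restricts on $\Int(B\times K_m)$ to a $J_b$-holomorphic function in $x$, and locally uniform limits of $J_b$-holomorphic functions are $J_b$-holomorphic (Weierstrass, applied fiberwise), the limit $F_i$ is $J$-holomorphic on $\Int(B\times K_m)$ for every $m$; as the interiors exhaust $B\times X$, we get $F_i\in\cO_J(B\times X)$. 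Thus $F=(F_1,F_2)\colon B\times X\to\CC^2$ is continuous and $J$-holomorphic.

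It remains to prove that $F_b\colon X\to\CC^2$ is proper for each fixed $b\in B$, i.e. that $\|F_b(x)\|\to\infty$ as $x$ leaves every compact subset of $X$. Fix $b$ and fix $x\in X$; since $X=\bigcup_n K_n$ and $K_n\subset\Int K_{n+1}$, there is a unique $n\ge 1$ with $x\in K_n\setminus\Int K_{n-1}$ (using $K_0=\emptyset$). By $(b)_n$ we have $\max\{\Re F_{n,1}(b,x),\Re F_{n,2}(b,x)\}>n-1$, so there is an index $i\in\{1,2\}$ with $\Re F_{n,i}(b,x)>n-1$. On the other hand $x\in K_n$, so the limit estimate above gives $|F_i(b,x)-F_{n,i}(b,x)|\le 2^{-(n-1)}\le 1$, whence
\[
\|F_b(x)\|\ge |F_i(b,x)|\ge \Re F_i(b,x)> (n-1)-1 = n-2.
\]
Consequently, for any $R>0$, if $\|F_b(x)\|\le R$ then the integer $n$ with $x\in K_n\setminus\Int K_{n-1}$ satisfies $n<R+2$, so $x\in K_m$ where $m=\lceil R+2\rceil$; that is, $F_b^{-1}(\overline{\mathbb B}(0,R))\subset K_m$ is contained in a compact set. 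Since $F_b$ is continuous this preimage is also closed, hence compact, and $F_b$ is proper.

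\textbf{Remark on the main point.} There is no real obstacle here: the proposition is exactly the device that converts the inductive construction (which is the genuinely hard part of proving Theorem~\ref{Main theorem}, and which uses Proposition~\ref{Proposition Mergelyan approximation with bounds} to produce the $F_{n,i}$ with properties $(a)_n$ and $(b)_n$) into the final map. The only points requiring a little care are that the convergence is \emph{locally} uniform on $B\times X$ rather than globally uniform — which is all that is needed both for continuity of $F$ and for fiberwise holomorphy — and that the properness estimate is applied one fiber $b$ at a time, so the bound $n-2$ on $\Re F_i(b,x)$ is independent of $b$ and only depends on which shell $K_n\setminus\Int K_{n-1}$ the point $x$ lies in. Note that we do not claim $F$ is a proper map of $B\times X$ into $\CC^2$ (it need not be when $B$ is noncompact); properness is asserted only for each slice $F_b$, which is precisely what the statement requires.
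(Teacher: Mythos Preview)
Your proof is correct and follows essentially the same approach as the paper's own proof: telescoping $(a)_n$ to get locally uniform convergence on each $B\times K_m$ with the bound $|F_i-F_{m,i}|\le 2^{-(m-1)}$, concluding $F_i\in\cO_J(B\times X)$, and then combining this bound with $(b)_n$ on the shell $K_n\setminus\Int K_{n-1}$ to get $\max\{\Re F_1,\Re F_2\}>n-2$ there, which gives fiberwise properness. Your write-up is in fact a bit more explicit than the paper's (which simply asserts the limit and the properness inequality), but there is no substantive difference.
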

\begin{proof} 
It follows from the condition $(a)_{n}$ that the sequences $(F_{n,1})_{n\in\NN}$ and $(F_{n,2})_{n\in\NN}$ converge uniformly on
the sets of the form $B\times K$, where $K\subset X$ is a compact subset. 
For the limit functions $F_{1}=\lim\limits_{n\to\infty}F_{n,1}$
and $F_{2}=\lim\limits_{n\to\infty}F_{n,2}$, we have that $F_{1},F_{2}\in\cO_{J}(B\times X)$.

For $n>1$ and $i=1,2$ it then follows from $(a)_{n}$ that 
\[
|F_{i}(b,x)-F_{n,i}(b,x)|<\frac{1}{2^{n}}+\frac{1}{2^{n+1}}+\ldots=\frac{1}{2^{n-1}}<1 \text{ for } (b,x)\in {B\times K_{n}}
\]
and further from $(b)_{n}$ that
\[
\max\{\Re F_{1}(b,x),\Re F_{2}(b,x)\}>n-2 \text{ for } (b,x)\in B\times(K_{n}\setminus\Int K_{n-1}),
\]
which implies that $F_{b}:X\to\CC^{2}$  is a proper map for every $b\in B$.
\end{proof}

We now consider the case $X=\R^2$, where the topology is trivial:

\begin{theorem}\label{Theorem proper maps on C} Let $B$ be a metrisable topological space, 
$k\in\ZZ_{+}$, $0<\alpha<1$ and let $J=\{J_{b}\}_{b\in B}$ be a continuous family of complex structures on $\RR^{2}$ of class $C^{(k,\alpha)}$, 
parametrised by $B$. Then there exists a $J$-holomorphic map $F:B\times\R^2\to\CC^{2}$ for which $F_{b}:\R^2\to\CC^{2}$ is a
proper map for every $b\in B$.
\end{theorem}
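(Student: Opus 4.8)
The plan is to apply Proposition \ref{Proposition sequence of functions} with an exhaustion of $\R^2$ by closed disks $K_n=\overline{D}(0,R_n)$ for a strictly increasing sequence $R_n\to\infty$ (with $R_0=0$, i.e. $K_0=\emptyset$, and $K_{n-1}\subset\Int K_n$). Since the topology of $\R^2$ is trivial, there are no ``critical'' stages to handle: the only real work is the noncritical step of pushing the boundary of the image outward while controlling the approximation on the previous disk. So I must construct $F_{n,1},F_{n,2}\in\cA_J(B\times K_n)$ inductively satisfying $(a)_n$ and $(b)_n$. Start with $F_{0,1}=F_{0,2}=0$ on $B\times K_0=\emptyset$ (vacuously), or rather begin the induction at $n=1$ with any bounded $J$-holomorphic functions on $B\times K_1$, say $F_{1,i}\equiv 0$; the substantive condition $(b)_n$ only needs $\max\{\Re F_{n,1},\Re F_{n,2}\}>n-1$ on the annulus $K_n\setminus\Int K_{n-1}$, and for $n=1$ this is $>0$ which we arrange by taking $F_{1,i}$ with large real part near $\partial K_1$ — but in fact it is cleanest to let the inductive step do all the work and just check the base case separately.

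The heart of the argument is the inductive step: given $F_{n-1,1},F_{n-1,2}\in\cA_J(B\times K_{n-1})$, produce $F_{n,1},F_{n,2}$. First extend $F_{n-1,i}$ to $J$-holomorphic functions on a neighbourhood of $B\times K_{n-1}$; we may assume they are defined and $J$-holomorphic on $B\times \Int K_n$ after a small shrinking, but to gain control on the annulus I instead do the following. Work on the annular region between $K_{n-1}$ and $K_n$: I want a function $g\in\cA_J(B\times K_n)$ that agrees (approximately) with $F_{n-1,i}$ on $K_{n-1}$ and has large real part on the outer part. Concretely, split the annulus $A_n:=K_n\setminus\Int K_{n-1}$ into two wide proper families $L_1,L_2$ of Runge compacts in $\R^2$ — for instance $L_1$ a ``half-annulus plus collar'' and $L_2$ the complementary half — arranged so that $K_{n-1}\cup L_1\cup L_2$ is again a (constant, here, since $X=\R^2$ and the $K_n$ are disks) proper family of Runge compacts, and so that $L_1\cap L_2$ meets $\partial K_n$ in two arcs. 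On $K_{n-1}$ take $h_i=F_{n-1,i}$; on $L_i$ add a $J$-holomorphic function whose real part is forced to be $>n-1$ while staying small on $K_{n-1}$ — this is where one uses that a large multiple of a function vanishing on $K_{n-1}$ to high order, or more simply a function close to a suitable constant on $L_i$ and close to $0$ on $K_{n-1}$, can be produced by Mergelyan on the disjoint-from-$K_{n-1}$ piece. Then apply Proposition \ref{Proposition Mergelyan approximation with bounds} with the families $K_1':=L_1$, $K_2':=L_2$ carrying the constraints $\Re F_{n,i}>n-1$ on $L_i$, the ambient Runge family $K:=K_n$ (constant), the target function equal to $F_{n-1,i}$-plus-correction, and the error bound $\epsilon(b)\equiv 2^{-n}$, to get global $F_{n,i}\in\cO_J(B\times\R^2)$; restricting to $B\times K_n$ gives the required elements of $\cA_J(B\times K_n)$. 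Condition $(a)_n$ holds because on $K_{n-1}$ the correction was chosen small and Mergelyan adds at most $2^{-n}$; condition $(b)_n$ holds because on each $L_i$ we forced $\Re F_{n,i}>n-1$ and $A_n=L_1\cup L_2$, so at each point of $A_n$ at least one of the two real parts exceeds $n-1$.

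I expect the main obstacle to be the bookkeeping that makes the two ``halves'' $L_1,L_2$ of the annulus genuinely \emph{wide, proper} families of \emph{Runge} compacts whose union with $K_{n-1}$ is still a proper Runge family, uniformly in $b$ — when $B$ is noncompact one cannot choose $R_n$ depending on $b$, but since $X=\R^2$ is fixed and the disks $K_n$ do not depend on $b$, the families here are in fact constant, so this reduces to the purely topological statement that a closed disk can be written as the union of a slightly smaller concentric disk and two Runge ``lune'' pieces covering the outer annulus. The genuinely $b$-dependent analytic input — that the approximating corrections can be taken with a continuous error bound and with the real-part inequalities preserved — is exactly packaged by Proposition \ref{Proposition Mergelyan approximation with bounds}, so beyond its invocation no new estimate is needed. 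Once the sequences $(F_{n,i})$ with $(a)_n$, $(b)_n$ are in hand, Proposition \ref{Proposition sequence of functions} delivers the desired $J$-holomorphic $F=(F_1,F_2):B\times\R^2\to\CC^2$ with every $F_b$ proper, completing the proof.
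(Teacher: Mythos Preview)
There is a genuine gap in the inductive step. Your half-annuli $L_1,L_2$ must satisfy $L_1\cup L_2=A_n$, so each $L_i$ meets $K_{n-1}$ along an arc of $\partial K_{n-1}$. But then a continuous function that is ``close to $0$ on $K_{n-1}$ and close to a large constant on $L_i$'' cannot exist: on the shared arc $L_i\cap\partial K_{n-1}$ it would have to be close to both. Equivalently, since you require $|F_{n,i}-F_{n-1,i}|<2^{-n}$ on $K_{n-1}$, continuity forces $\Re F_{n,i}\approx\Re F_{n-1,i}$ along $L_i\cap\partial K_{n-1}$, and you have no inductive control on $\Re F_{n-1,i}$ there. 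So the inequality $\Re F_{n,i}>n-1$ cannot be propagated to all of $L_i$. If instead you shrink $L_i$ away from $K_{n-1}$ to make the pieces disjoint (so that the piecewise Mergelyan data is well-defined), then $L_1\cup L_2$ no longer covers $A_n$ and $(b)_n$ fails on the uncovered inner collar.

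What is missing is precisely the extra inductive datum carried by the paper: one must record, at each stage, arcs of $\partial K_n$ on which $\Re F_{n,1}>n$ (the ``odd'' arcs) and complementary arcs on which $\Re F_{n,2}>n$ (the ``even'' arcs); this is condition $(d)_n$. The step then proceeds by first extending $F_{n,i}$ continuously along radial segments attached at the arc endpoints (where the interior is empty, so $\cA_J$ imposes no holomorphy constraint), Mergelyan-approximating to obtain globally holomorphic $\tilde F_{n,i}$, and then thickening by continuity (Proposition~\ref{Proposition Collar neighbourhood}) to get $\Re\tilde F_{n,i}>n$ on frame-shaped sets $W$; only the complementary $L$-pieces, which \emph{are} disjoint from $K_n$, receive the large constant. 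The crucial crossover is that $F_{n+1,1}$ is made large on $W_{\text{odd}}\cup L_{\text{even}}$ and $F_{n+1,2}$ on $W_{\text{even}}\cup L_{\text{odd}}$: because these live in different sectors the piecewise Mergelyan data is legitimate, and together they cover $A_{n+1}$. Finally, your expectation that ``the families here are in fact constant'' is incorrect: the thickening width $\delta$ in Proposition~\ref{Proposition Collar neighbourhood} depends on the function $\tilde F_{n,i}$ and hence on $b$, so the new arc endpoints $\phi_{n+1,j}$ on $\partial K_{n+1}$ are genuinely $b$-dependent continuous functions, and their number triples at each step. This $b$-dependence is unavoidable and is exactly why nonconstant proper families enter even in the topologically trivial case $X=\R^2$.
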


To prove Theorem \ref{Theorem proper maps on C} we first introduce some notations, where 
we identify $\R^2$ and $\CC$ for convenience.
We choose the exhaustion of $\CC$
by closed disks
\[
K_{n}=n\overline{\DD}=\{z\in\CC\,:\,|z|\leq n\}
\]
for $n\in\NN$, and denote by
\[
A_{n}=K_{n}\setminus\Int K_{n-1}
\]
the closed annulus in $\CC$ between circles of radii $n-1$ and $n$. Also let $K_{0}=\emptyset$.

\begin{definition}\label{Definition Pictures of sets}
(1) Let $n\in\NN$ and suppose we have $k\in\NN$ angles $0\leq\phi_{1}<\ldots<\phi_{k}<2\pi$. 
We then define the following subsets of $\CC$:
\begin{eqnarray*}
\gamma(n,\{\phi_{1},\ldots,\phi_{k}\})&=&\{re^{i\phi}\,:\,r\in[n,n+1],\phi\in\{\phi_{1},\ldots,\phi_{k}\}\}, \\
p(n,\{\phi_{1},\ldots,\phi_{k}\})&=&\{ne^{i\phi}\,:\,\phi\in\{\phi_{1},\ldots,\phi_{k}\}\}.
\end{eqnarray*}
The set $\gamma(n,\{\phi_{1},\ldots,\phi_{k}\})$ is the union of radial line segments at angles in $\{\phi_{1},\ldots,\phi_{k}\}$, while their
inner endpoints form the set $p(n,\{\phi_{1},\ldots,\phi_{k}\})$.

(2) Let $n\in\NN$ and suppose $\phi_{1},\phi_{2}\in[0,2\pi]$ are such that $0<\phi_{2}-\phi_{1}<2\pi$. We then define:
\begin{eqnarray*}
D(n,\phi_{1},\phi_{2})&=&\{re^{i\phi}\,:\,r\in[n,n+1],\phi\in[\phi_{1},\phi_{2}]\}, \\
\alpha(n,\phi_{1},\phi_{2})&=&\{ne^{i\phi}\,:\,\phi\in[\phi_{1},\phi_{2}]\}.
\end{eqnarray*}
The set $D(n,\phi_{1},\phi_{2})\subset\CC$ is the part of the annulus $A_{n}$ which lies between angles $\phi_{1}$ and $\phi_{2}$.
Its inner boundary arc is denoted by $\alpha(n,\phi_{1},\phi_{2})$.

(3) Let $n\in\NN$, let $\phi_{1},\phi_{2}\in[0,2\pi]$ be such that $0<\phi_{2}-\phi_{1}<2\pi$ and suppose that
$0<\delta<\frac{1}{3}\min\{1,\phi_{2}-\phi_{1}\}$. We then define:
\begin{eqnarray*}
L(n,\delta,\phi_{1},\phi_{2})&=&\{re^{i\phi}\,:\,r\in[n+\delta,n+1],\phi\in[\phi_{1}+\delta,\phi_{2}-\delta]\}, \\
W(n,\delta,\phi_{1},\phi_{2})&=&\overline{D(n,\phi_{1},\phi_{2})\setminus L(n,\delta,\phi_{1},\phi_{2})}.
\end{eqnarray*}
The set $W(n,\delta,\phi_{1},\phi_{2})$ is the closed $\delta$-neighbourhood of $\gamma(n,\{\phi_{1},\phi_{2}\})\cup\alpha(n,\phi_{1},\phi_{2})$
in $D(n,\phi_{1},\phi_{2})$.

(4) Let $n,k\in\NN$ and suppose $0=\phi_{1}<\phi_{2}<\ldots<\phi_{k}<2\pi$. Furthermore, let 
$0<\delta<\frac{1}{3}\min\{1,\phi_{2}-\phi_{1},\phi_{3}-\phi_{2},\ldots,\phi_{k}-\phi_{k-1},2\pi-\phi_{k}\}$.
We then define:
\begin{eqnarray*}
W(n,\delta,\{\phi_{1},\phi_{2},\ldots,\phi_{k}\})&=&W(n,\delta,\phi_{1},\phi_{2})\cup\ldots\cup W(n,\delta,\phi_{k-1},\phi_{k})\cup W(n,\delta,\phi_{k},2\pi), \\
L(n,\delta,\{\phi_{1},\phi_{2},\ldots,\phi_{k}\})&=&L(n,\delta,\phi_{1},\phi_{2})\cup\ldots\cup L(n,\delta,\phi_{k-1},\phi_{k})\cup L(n,\delta,\phi_{k},2\pi).
\end{eqnarray*}

(5) Let $n\in\NN$ and suppose $0=\phi_{1}<\phi_{2}<\ldots<\phi_{k}<2\pi$, where $k$ is an even number. We then define:
\begin{eqnarray*}
D_{\text{odd}}(n,\{\phi_{1},\phi_{2},\ldots,\phi_{k}\})&=&D(n,\phi_{1},\phi_{2})\cup D(n,\phi_{3},\phi_{4})\cup\ldots\cup D(n,\phi_{k-1},\phi_{k}), \\
D_{\text{even}}(n,\{\phi_{1},\phi_{2},\ldots,\phi_{k}\})&=&D(n,\phi_{2},\phi_{3})\cup D(n,\phi_{4},\phi_{5})\cup\ldots\cup D(n,\phi_{k},2\pi).
\end{eqnarray*}
In a similar fashion we define the sets $\alpha_{\text{odd}}$, 
$\alpha_{\text{even}}$, $L_{\text{odd}}$, $L_{\text{even}}$, $W_{\text{odd}}$
and $W_{\text{even}}$.
\end{definition}
All of the above sets are compact subsets of $\CC$ as shown in Figure \ref{Figure Sets}.
\begin{figure}[H]
  \centering
  \includegraphics[width=8cm]{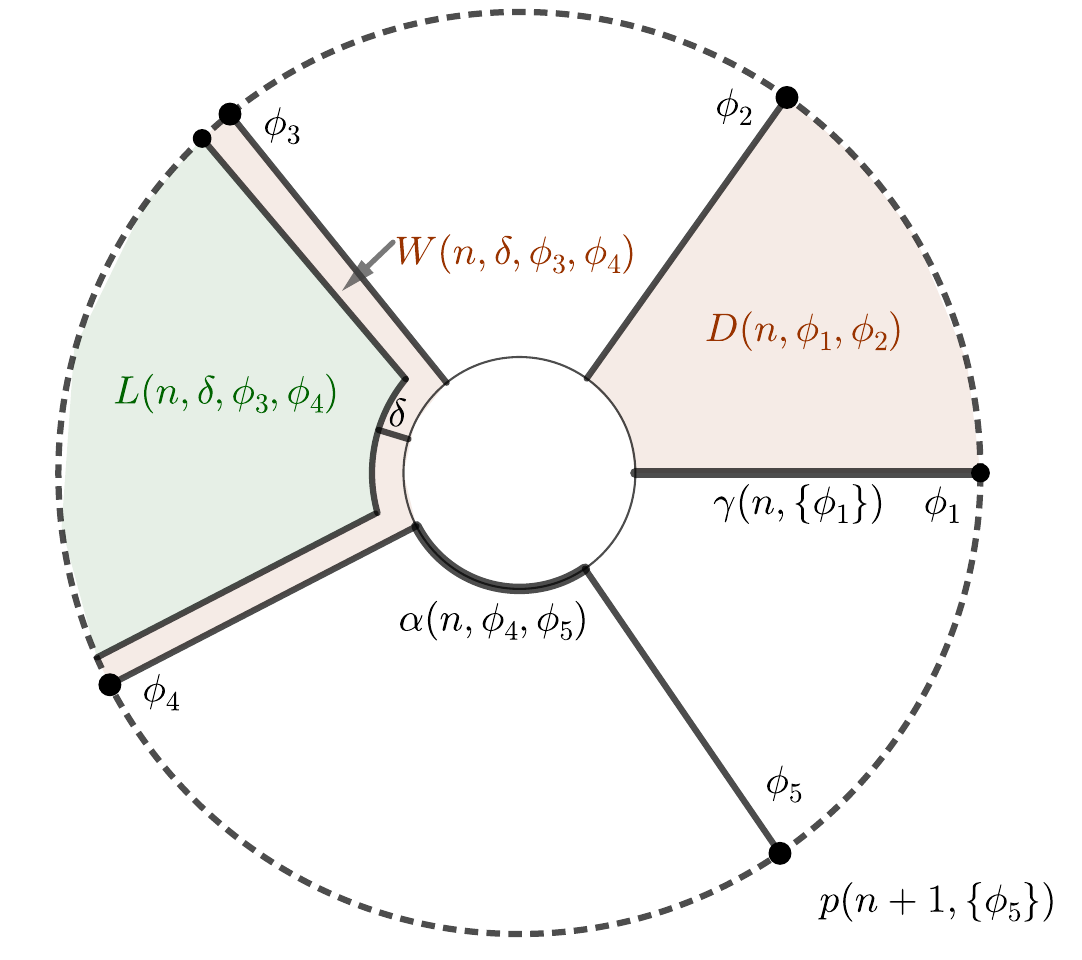}
\caption{Pictures of sets from Definition \ref{Definition Pictures of sets}}\label{Figure Sets}
\end{figure} 
 
Next we extend the above definitions to the setting of $B\times\CC$, where
$B$ is a topological space. If $\phi_{i}:B\to[0,2\pi]$ and $\delta:B\to(0,\frac{1}{3})$ are continuous functions 
that satisfy the conditions $(1)-(5)$ in Definition \ref{Definition Pictures of sets} pointwise, we can define
families of compact subsets of $\CC$ whose fibres are the corresponding sets. We denote such a family by adding a subscript $B$. For example, 
if $n\in\NN$ and $\phi_{1},\phi_{2}:B\to[0,2\pi]$ are continuous functions such that $0<\phi_{2}(b)-\phi_{1}(b)<2\pi$ for every $b\in B$, 
then $D_{B}(n,\phi_{1},\phi_{2})$ is a subset of $B\times\CC$, which is implicitly defined by
\[
(D_{B}(n,\phi_{1},\phi_{2}))_{b}=D(n,\phi_{1}(b),\phi_{2}(b))\subset\CC
\]
for every $b\in B$.

\begin{proposition}
Let $B$ be a topological space. The sets $\gamma_{B}, p_{B}, D_{B},\alpha_{B},L_{B}$ and $W_{B}$ are all proper families of compact subsets
of $\CC$. The same is true for their odd and even versions.
\end{proposition}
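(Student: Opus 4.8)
The strategy is to reduce all of these sets to one elementary building block and then apply Proposition \ref{Proposition Proper families}. Fix $n\in\NN$ and continuous functions $\rho_-,\rho_+\colon B\to[n,n+1]$ with $\rho_-\le\rho_+$ and $\psi_-,\psi_+\colon B\to[0,2\pi]$ with $\psi_-\le\psi_+$, and set
\[
E(\rho_-,\rho_+;\psi_-,\psi_+)=\bigl\{(b,re^{i\phi})\in B\times\CC \ :\ \rho_-(b)\le r\le\rho_+(b),\ \psi_-(b)\le\phi\le\psi_+(b)\bigr\}.
\]
I would first show that every such $E$ is a proper family of compact subsets of $\CC$. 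It is contained in the constant family $B\times\{z\in\CC:n\le|z|\le n+1\}$, which is a proper family of compact subsets of $\CC$ (a constant family with compact fibre), so by Proposition \ref{Proposition Proper families}(1) it suffices to prove that $E$ is closed in $B\times\CC$. Put $Q=[n,n+1]\times[0,2\pi]$ and $p\colon Q\to\CC$, $p(r,\phi)=re^{i\phi}$. Then $E$ is the image under $\id\times p\colon B\times Q\to B\times\CC$ of the set $\{(b,r,\phi)\in B\times Q:\rho_-(b)\le r\le\rho_+(b),\ \psi_-(b)\le\phi\le\psi_+(b)\}$, which is closed in $B\times Q$ by continuity of $\rho_\pm,\psi_\pm$; since $Q$ is compact, $\id\times p$ is a closed map (tube lemma), so $E$ is closed. (When $B$ is first countable one may instead argue by sequences: from a sequence in $E$ converging in $B\times\CC$, pass to a subsequence whose polar coordinates converge in the compact set $Q$ and use continuity of the four bounding functions to place the limit in $E$.)

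It then remains to write each set from Definition \ref{Definition Pictures of sets} as a finite union of such blocks and invoke Proposition \ref{Proposition Proper families}(2). Straight from the definitions,
\[
D_B(n,\phi_1,\phi_2)=E(n,n+1;\phi_1,\phi_2),\qquad \alpha_B(n,\phi_1,\phi_2)=E(n,n;\phi_1,\phi_2),
\]
and
\[
L_B(n,\delta,\phi_1,\phi_2)=E(n+\delta,n+1;\phi_1+\delta,\phi_2-\delta),
\]
where the bounds are admissible because $0<\delta<\tfrac13\min\{1,\phi_2-\phi_1\}$ gives $n\le n+\delta\le n+1$ and $\phi_1+\delta\le\phi_2-\delta$, all values staying in $[0,2\pi]$; furthermore $\gamma_B(n,\{\phi_1,\dots,\phi_k\})=\bigcup_{j=1}^{k}E(n,n+1;\phi_j,\phi_j)$ and $p_B(n,\{\phi_1,\dots,\phi_k\})=\bigcup_{j=1}^{k}E(n,n;\phi_j,\phi_j)$.

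The only case demanding a short computation is $W_B$. Writing out $W(n,\delta,\phi_1,\phi_2)=\overline{D(n,\phi_1,\phi_2)\setminus L(n,\delta,\phi_1,\phi_2)}$ in polar coordinates---using that a point of $D(n,\phi_1,\phi_2)$ has a unique angular coordinate in $[\phi_1,\phi_2]$ because $\phi_2-\phi_1<2\pi$, and taking the closure of each of the three pieces of $D\setminus L$ cut out by $r<n+\delta$, by $\phi<\phi_1+\delta$, and by $\phi>\phi_2-\delta$ respectively---one obtains
\[
W_B(n,\delta,\phi_1,\phi_2)=E(n,n+\delta;\phi_1,\phi_2)\cup E(n,n+1;\phi_1,\phi_1+\delta)\cup E(n,n+1;\phi_2-\delta,\phi_2),
\]
again a finite union of blocks. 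Finally, the several-angle sets in parts (4)--(5) of Definition \ref{Definition Pictures of sets}, together with the odd and even variants $D_{\mathrm{odd}/\mathrm{even},B}$, $\alpha_{\mathrm{odd}/\mathrm{even},B}$, $L_{\mathrm{odd}/\mathrm{even},B}$, $W_{\mathrm{odd}/\mathrm{even},B}$, are by construction finite unions of the sets already handled, so Proposition \ref{Proposition Proper families}(2) applies once more and the proof is complete. I expect the closure identity for $W_B$ to be the only genuinely delicate step; everything else is bookkeeping, together with the standard fact that $\id\times p$ is a closed map.
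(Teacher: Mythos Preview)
Your proof is correct and follows the same strategy as the paper: both observe that the sets lie inside a constant proper family (you use $B\times\{n\le|z|\le n+1\}$, the paper uses $B\times K_{n+1}$) and then invoke Proposition \ref{Proposition Proper families}(1), reducing the problem to showing closedness. The paper dispatches closedness in one line by noting that the complements are open by continuity of the $\phi_i$ and $\delta$, whereas you give a more structured argument via the building blocks $E(\rho_-,\rho_+;\psi_-,\psi_+)$ and the closed-map factorisation through $B\times Q$; this is more detailed but not a genuinely different route.
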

\begin{proof}
All these sets are subsets of the constant family $B\times K_{n+1}$, so by Proposition \ref{Proposition Proper families} it suffices to prove they
are closed subsets. This follows from the fact that their complements in $B\times\CC$ are open since the functions $\phi_{i}$ and $\delta$
are continuous. 
\end{proof}

To be able to use the Mergelyan theorem to construct the sequence of functions from Proposition \ref{Proposition sequence of functions}, 
we need the following result.

\begin{proposition}\label{Proposition Collar neighbourhood}
Let $B$ be a metrisable topological space, $n\in\NN$ and let 
$\phi_{1},\phi_{2}:B\to[0,2\pi]$ be continuous functions such that $0<\phi_{2}(b)-\phi_{1}(b)<2\pi$
for every $b\in B$. Suppose $f:D_{B}(n,\phi_{1},\phi_{2})\to\CC$ is a continuous function such that:
\begin{enumerate}
\item $\Re f>n$ on $\gamma_{B}(n,\{\phi_{1},\phi_{2}\})\cup\alpha_{B}(n,\phi_{1},\phi_{2})$,

\item $\Re f>n+1$ on $p_{B}(n+1,\{\phi_{1},\phi_{2}\})$.
\end{enumerate}
Then there exists a continuous function $\delta:B\to(0,\frac{1}{3})$, satisfying 
$\delta<\tfrac{1}{3}\min\{1,\phi_{2}-\phi_{1}\}$ and such that:
\begin{enumerate}
\item $\Re f>n$ on $W_{B}(n,\delta,\phi_{1},\phi_{2})$,

\item $\Re f>n+1$ on $\alpha_{B}(n+1,\phi_{1},\phi_{1}+\delta)\cup\alpha_{B}(n+1,\phi_{2}-\delta,\phi_{2})$.
\end{enumerate}
\end{proposition}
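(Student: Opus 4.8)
The plan is to exploit continuity of $f$ together with compactness, and to produce the function $\delta$ by the same lower-semicontinuity-plus-partition-of-unity device used in Proposition \ref{Proposition minimal function}. First I would fix $b_0\in B$ and work near it. The sets $W(n,\delta,\phi_1(b_0),\phi_2(b_0))$ shrink down to $\gamma(n,\{\phi_1(b_0),\phi_2(b_0)\})\cup\alpha(n,\phi_1(b_0),\phi_2(b_0))$ as $\delta\to 0$, and the arcs $\alpha(n+1,\phi_1(b_0),\phi_1(b_0)+\delta)$, $\alpha(n+1,\phi_2(b_0)-\delta,\phi_2(b_0))$ shrink to the two points of $p(n+1,\{\phi_1(b_0),\phi_2(b_0)\})$. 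By hypothesis $\Re f$ is $>n$ (resp. $>n+1$) on these limiting compact sets, and $\Re f$ is continuous on the compact set $D_{B}(n,\phi_1,\phi_2)\cap (\{b_0\}\times\CC)$; so there is a single $\delta_0=\delta_0(b_0)>0$, with $\delta_0<\tfrac13\min\{1,\phi_2(b_0)-\phi_1(b_0)\}$, for which the two desired inequalities hold on the $b_0$-fibre. The point is that this should in fact be promoted to a whole neighbourhood of $b_0$: by continuity of $\phi_1,\phi_2$ and of $f$, and using that $D_B(n,\phi_1,\phi_2)$ is a proper (indeed, contained in the constant) family of compacts, one shrinks to a neighbourhood $U_{b_0}$ of $b_0$ on which a common constant $\epsilon_{b_0}:=\delta_0(b_0)$ works, i.e.\ $\Re f>n$ on $W_B(n,\epsilon_{b_0},\phi_1,\phi_2)|_{U_{b_0}}$ and $\Re f>n+1$ on the two arc-families over $U_{b_0}$, and moreover $\epsilon_{b_0}<\tfrac13\min\{1,\phi_2(b)-\phi_1(b)\}$ for all $b\in U_{b_0}$ (shrink $U_{b_0}$ further if necessary, using continuity of $\phi_2-\phi_1$ and positivity at $b_0$).

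Granting that local step, the global function is assembled exactly as in Proposition \ref{Proposition minimal function}(b): $B$ is metrisable hence paracompact, so refine $\{U_{b_0}\}$ to a locally finite cover $\{V_b\}_{b\in B'}$ with $V_b\subset U_b$, pick a continuous partition of unity $\{\rho_b\}_{b\in B'}$ subordinate to it, and set
\[
\delta=\sum_{b\in B'}\epsilon_b\,\rho_b : B\to(0,\tfrac13).
\]
Then for any $b\in B$ one has $\delta(b)\le \max\{\epsilon_{b'} : b\in V_{b'}\}<\tfrac13\min\{1,\phi_2(b)-\phi_1(b)\}$, so $\delta$ satisfies the size constraint. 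To check the two conclusions at a fixed $b$: the value $\delta(b)$ is a convex combination of finitely many $\epsilon_{b'}$ with $b\in U_{b'}$; since for each such $b'$ the inequality $\Re f>n$ holds on the whole annular wedge $D(n,\phi_1(b),\phi_2(b))$ outside $L(n,\epsilon_{b'},\phi_1(b),\phi_2(b))$, and the family of sets $\{L(n,t,\phi_1(b),\phi_2(b))\}_{t}$ is monotone in $t$, the region $W(n,\delta(b),\phi_1(b),\phi_2(b))$ is contained in $W(n,\epsilon_{b'},\phi_1(b),\phi_2(b))$ for the largest such $\epsilon_{b'}$ — wait, more simply: $W(n,t,\cdot)$ is \emph{increasing} in $t$, so $W(n,\delta(b),\cdot)\subset W(n,\epsilon_{\max},\cdot)$ where $\epsilon_{\max}$ is the largest relevant $\epsilon_{b'}$, and $\Re f>n$ there. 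Similarly the arc $\alpha(n+1,\phi_1(b),\phi_1(b)+\delta(b))\subset\alpha(n+1,\phi_1(b),\phi_1(b)+\epsilon_{\max})$, where $\Re f>n+1$, and likewise for the other arc. This yields conclusions (1) and (2).

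\textbf{Main obstacle.} The only genuinely delicate point is the local step, i.e.\ upgrading ``a good $\delta$ exists on the fibre over $b_0$'' to ``a fixed $\epsilon_{b_0}$ is good over a whole neighbourhood of $b_0$''. One must rule out the possibility that, along a sequence $b_j\to b_0$, the inequality $\Re f>n$ fails on $W(n,\epsilon_{b_0},\phi_1(b_j),\phi_2(b_j))$ at some point $z_j$. But such $z_j$ lie in the compact set $K_{n+1}$, so after passing to a subsequence $z_j\to z_\ast$; since the $\phi_i$ are continuous and $W(n,t,\cdot,\cdot)$ depends upper-semicontinuously (as a set, in the Hausdorff sense) on its parameters, $z_\ast\in W(n,\epsilon_{b_0},\phi_1(b_0),\phi_2(b_0))$, and continuity of $f$ on $D_B(n,\phi_1,\phi_2)$ (recall $f$ is merely continuous on $B\times\CC$, not holomorphic, so this is legitimate) forces $\Re f(b_0,z_\ast)\le n$, contradicting the fibrewise choice. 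The arc conditions are handled identically. Everything else is the standard paracompactness packaging already deployed in Proposition \ref{Proposition minimal function}; I do not expect surprises there.
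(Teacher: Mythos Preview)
Your argument is correct. The local step is sound: $W_B(n,\epsilon_{b_0},\phi_1,\phi_2)$ restricted to a small neighbourhood of $b_0$ (on which $\epsilon_{b_0}<\tfrac13\min\{1,\phi_2-\phi_1\}$) is a closed subset of the constant family $B\times K_{n+1}$, hence a proper family, so the sequential limit argument goes through exactly as you describe. The patching step is also fine, and your observation that $W(n,t,\cdot,\cdot)$ is monotone increasing in $t$ (equivalently, that the arcs $\alpha(n+1,\phi_1,\phi_1+t)$ are increasing in $t$) is precisely what makes the convex combination $\delta=\sum \epsilon_b\rho_b$ inherit the inequalities from the largest $\epsilon_{b'}$ active at a given point.

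That said, the paper's proof takes a slicker route that avoids redoing the local-to-global packaging. Instead of working fibrewise and then gluing, the paper forms the \emph{bad set} $K'=\{(b,x)\in D_B:\Re f\le n\}\cup\{(b,x)\in\alpha_B(n+1,\phi_1,\phi_2):\Re f\le n+1\}$, enlarges it slightly to make it wide, observes it is a proper family disjoint from $\gamma_B\cup\alpha_B$, and then applies Proposition~\ref{Proposition minimal function} directly to the three positive continuous functions $(b,re^{i\phi})\mapsto r-n$, $\phi-\phi_1(b)$, $\phi_2(b)-\phi$ on $K$ to obtain $\delta$ in one stroke. In effect, your proof re-derives the content of Proposition~\ref{Proposition minimal function} inside the argument (local constants plus partition of unity), whereas the paper factors through it. Your monotonicity observation is the conceptual point that makes both work; the paper's version just hides it inside the choice of those three functions.
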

\begin{proof} 
Let us define 
\[
K'=\{(b,x)\in D_{B}(n,\phi_{1},\phi_{2}):\Re f(b,x)\leq n\}\cup\{(b,x)\in\alpha_{B}(n+1,\phi_{1},\phi_{2}):\Re f(b,x)\leq n+1)\}.
\]
The set $K'$ is a closed subset of the proper family of compact subsets $D_{B}(n,\phi_{1},\phi_{2})$, hence $K'$ is a proper family as well. Since $K'$
may have empty fibers, we enlarge it to a proper family of compact subsets
\[
K=K'\cup p_{B}(n+1,\{\tfrac{1}{2}(\phi_{1}+\phi_{2})\}),
\]
for which $K_{b}\neq\emptyset$ for every $b\in B$. Then $K$ is a wide, proper family of compact subsets of $\CC$ which 
is disjoint from $\gamma_{B}(n,\{\phi_{1},\phi_{2}\})\cup\alpha_{B}(n,\phi_{1},\phi_{2})$. 

For any $(b,x)\in K$ we can write $x$ in the form $x=re^{i\phi}$ for unique $r\in(n,n+1]$ and $\phi\in(\phi_{1}(b),\phi_{2}(b))$. 
The functions $\phi_{2}-\phi$, $\phi-\phi_{1}$ and $r-n$ are continuous and positive on $K$. By Proposition \ref{Proposition minimal function}, we can find a function $\delta:B\to(0,\infty)$ such that for every $(b,re^{i\phi})\in K$ we have:
\begin{eqnarray*}
r&\in&(n+\delta(b),n+1], \\
\phi&\in&(\phi_{1}(b)+\delta(b),\phi_{2}(b)-\delta(b)).
\end{eqnarray*}
If needed, we can make $\delta$ smaller, so that $\delta<\tfrac{1}{3}\min\{1,\phi_{2}-\phi_{1}\}$.
We then have $\Re f>n$ on $W_{B}(n,\delta,\phi_{1},\phi_{2})$ and 
$\Re f>n+1$ on $\alpha_{B}(n+1,\phi_{1},\phi_{1}+\delta)\cup\alpha_{B}(n+1,\phi_{2}-\delta,\phi_{2})$.
\end{proof}

\begin{proof}[Proof of Theorem \ref{Theorem proper maps on C}]
Let $l_{n}=3^{n-1}$ for $n\in\NN$.
According to Proposition \ref{Proposition sequence of functions}, it suffices to construct a sequence of functions $F_{n,1},F_{n,2}\in \cA_{J}(B\times K_{n})$ 
that for every $n\in\NN$ satisfy the conditions:
\begin{enumerate}
\item [$(a)_{n}$] $|F_{n,i}(b,x)-F_{n-1,i}(b,x)|<\frac{1}{2^{n-1}}$ for every $(b,x)\in {B\times K_{n-1}}$ and $i=1,2$,
\item [$(b)_{n}$] $\max\{\Re F_{n,1}(b,x),\Re F_{n,2}(b,x)\}>n-1$ for every $(b,x)\in B\times A_{n}$.
\end{enumerate}
We construct such a sequence inductively, together with the sequence of continuous families of angles:  These angles are defined
by continuous functions $\phi_{n,j}:B\to [0,2\pi]$ for $j\in\{1,\ldots,2l_{n}+1\}$, which 
satisfy for every $n\in\N$ the following conditions:
\begin{enumerate}
\item [$(c)_{n}$] $0=\phi_{n,1}(b)<\phi_{n,2}(b)<\cdots<\phi_{n,2l_{n}}(b)<2\pi=\phi_{n,2l_{n}+1}(b)$ for every $b\in B$,
\item [$(d)_{n}$] $\Re F_{n,1}>n$ on $(\alpha_{\text{odd}})_{B}(n,\{\phi_{n,1},\phi_{n,2},\ldots,\phi_{n,2l_{n}}\})$, \\ 
$\Re F_{n,2}>n$ on $(\alpha_{\text{even}})_{B}(n,\{\phi_{n,1},\phi_{n,2},\ldots,\phi_{n,2l_{n}}\})$.
\end{enumerate}

To start with the induction, we define constant functions $\phi_{1,1},\phi_{1,2},\phi_{1,3}:B\to[0,2\pi]$ by 
\[
\phi_{1,1}=0,\,\phi_{1,2}=\pi,\,\phi_{1,3}=2\pi
\]
and choose any functions $F_{0,1},F_{0,2}\in \cA_{J}(B\times K_{0})$ and $F_{1,1},F_{1,2}\in \cA_{J}(B\times K_{1})$ that satisfy the conditions $(a)_1$, $(b)_{1}$ and $(d)_{1}$.
(For example, we could just choose appropriate constant functions $F_{0,1}$, $F_{0,2}$, $F_{1,1}$ and $F_{1,2}$.) 

Suppose that for some $n\in\NN$ we have functions $F_{m,1},F_{m,2}\in \cA_{J}(B\times K_{m})$ and $\phi_{m,j}:B\to[0,2\pi]$  for $j\in\{1,2,\ldots,2l_{m}+1\}$
which satisfy conditions $(a)_{m}$, $(b)_{m}$, $(c)_{m}$ and $(d)_{m}$ for $m\in\{1,2,\ldots,n\}$.
In the induction step, we construct continuous functions 
$$\phi_{n+1,1},\phi_{n+1,2},\ldots,\phi_{n+1,2l_{n+1}+1}:B\to[0,2\pi],$$ that satisfy
\[
0=\phi_{n+1,1}<\phi_{n+1,2}<\ldots<\phi_{n+1,2l_{n+1}}<2\pi=\phi_{n+1,2l_{n+1}+1}
\]
and functions $F_{n+1,1},F_{n+1,2}\in \cA_{J}(B\times K_{n+1})$ that satisfy $(a)_{n+1}$, $(b)_{n+1}$ and $(d)_{n+1}$.
Before we turn to details let us quickly describe the main idea of the induction step. We need to construct functions 
$F_{n+1,1},F_{n+1,2}\in \cA_{J}(B\times K_{n+1})$ for which $\max\{\Re F_{n+1,1},\Re F_{n+1,2}\}>n$ on $B\times A_{n+1}$. To do that,
we split the inductive step into three parts. In the first part, we use Mergelyan's theorem to construct
functions $\tilde{F}_{n,1},\tilde{F}_{n,2}\in\cO_{J}(B\times\RR^{2})$ which satisfy 
$\max\{\Re \tilde{F}_{n,1},\Re \tilde{F}_{n,2}\}>n$ on the subset $\gamma_{B}(n,\{\phi_{n,1},\phi_{n,2},\ldots,\phi_{n,2l_{n}}\})$
of $B\times A_{n+1}$. Next we use Proposition \ref{Proposition Collar neighbourhood} to show that there exists a 
continuous function $\delta:B\to(0,\frac{1}{3})$ such that 
$\max\{\Re \tilde{F}_{n,1},\Re \tilde{F}_{n,2}\}>n$ on the subset $W_{B}(n,\{\phi_{n,1},\phi_{n,2},\ldots,\phi_{n,2l_{n}}\})$
of $B\times A_{n+1}$. In the third part, we use the idea from the proofs in \cite{ALJDG,AlarconForstneric2013IM}
to obtain functions $F_{n+1,1},F_{n+1,2}\in \cA_{J}(B\times K_{n+1})$ for which $\max\{\Re F_{n+1,1},\Re F_{n+1,2}\}>n$ on $B\times A_{n+1}$. 

Let us now describe the details. 
First we construct functions $\tilde{F}_{n,1},\tilde{F}_{n,2}\in\cO_{J}(B\times\RR^{2})$ that satisfy:
\begin{enumerate}
\item [$(a^{1})_{n+1}$] $|\tilde{F}_{n,i}(b,x)-F_{n,i}(b,x)|<\frac{1}{2^{n+1}}$ for $(b,x)\in {B\times K_{n}}$ and $i=1,2$,
\item [$(b^{1})_{n+1}$] $\Re \tilde{F}_{n,1}>n$ on 
$\gamma_{B}(n,\{\phi_{n,1},\ldots,\phi_{n,2l_{n}}\})\cup(\alpha_{\text{odd}})_{B}(n,\{\phi_{n,1},\ldots,\phi_{n,2l_{n}}\})$, \\ 
$\Re \tilde{F}_{n,2}>n$ on 
$\gamma_{B}(n,\{\phi_{n,1},\ldots,\phi_{n,2l_{n}}\})\cup(\alpha_{\text{even}})_{B}(n,\{\phi_{n,1},\ldots,\phi_{n,2l_{n}}\})$,
\item [$(d^{1})_{n+1}$] $\Re \tilde{F}_{n,i}>n+1$ on $p_{B}(n+1,\{\phi_{n,1},\ldots,\phi_{n,2l_{n}}\})$ for $i=1,2$.
\end{enumerate}
To do that we first continuously extend the functions $F_{n,1},F_{n,2}$ from the set $B\times K_{n}$ to the set
$(B\times K_{n})\cup\gamma_{B}(n,\{\phi_{n,1},\ldots,\phi_{n,2l_{n}}\})$ so that 
$\Re F_{n,i}>n$ on $\gamma_{B}(n,\{\phi_{n,1},\ldots,\phi_{n,2l_{n}}\})$ and $\Re F_{n,i}>n+1$ on 
$p_{B}(n+1,\{\phi_{n,1},\ldots,\phi_{n,2l_{n}}\})$ for $i=1,2$.
Now note that $\Re F_{n,1}>n$ on the proper family 
$\gamma_{B}(n,\{\phi_{n,1},\ldots,\phi_{n,2l_{n}}\})\cup(\alpha_{\text{odd}})_{B}(n,\{\phi_{n,1},\ldots,\phi_{n,2l_{n}}\})$
and that $\Re F_{n,1}>n+1$ on the proper family $p_{B}(n+1,\{\phi_{n,1},\ldots,\phi_{n,2l_{n}}\})$
of compact subsets of $\RR^{2}$. 
The union of these two proper families is contained in the proper family $(B\times K_{n})\cup\gamma_{B}(n,\{\phi_{n,1},\ldots,\phi_{n,2l_{n}}\})$
of Runge compacts in $\RR^{2}$, so by 
Proposition \ref{Proposition Mergelyan approximation with bounds} we can find a function $\tilde{F}_{n,1}\in\cO_{J}(B\times\RR^{2})$ that satisfies
$(a^{1})_{n+1}$, $(b^{1})_{n+1}$ and $(d^{1})_{n+1}$. In a similar fashion we also obtain a function $\tilde{F}_{n,2}\in\cO_{J}(B\times\RR^{2})$
which approximates the function $F_{n,2}$.

We now proceed to the second part of the induction step. Consider the function $\tilde{F}_{n,1}$ on the proper family 
$(D_{\text{odd}})_{B}(n,\{\phi_{n,1},\ldots,\phi_{n,2l_{n}}\})$ of compact subsets of $\RR^{2}$. From
Proposition \ref{Proposition Collar neighbourhood} it follows that there exists a continuous function $\delta_{1}:B\to(0,\frac{1}{3})$ such that:  
\begin{enumerate}
\item [$\cdot$] $\Re \tilde{F}_{n,1}>n$ on $(W_{\text{odd}})_{B}(n,\delta_{1},\{\phi_{n,1},\ldots,\phi_{n,2l_{n}}\})$,

\item [$\cdot$] $\Re \tilde{F}_{n,1}>n+1$ on 
$\bigcup\limits_{k=1}^{l_{n}}\left(\alpha_{B}(n+1,\phi_{n,2k-1},\phi_{n,2k-1}+\delta_{1})\cup\alpha_{B}(n+1,\phi_{n,2k}-\delta_{1},\phi_{n,2k})\right)$.
\end{enumerate}
In the sequel, we repeat this argument for the function $\tilde{F}_{n,2}$ on the 
$(D_{\text{even}})_{B}(n,\{\phi_{n,1},\ldots,\phi_{n,2l_{n}}\})$ to obtain a function $\delta_{2}:B\to(0,\frac{1}{3})$ such
that $\tilde{F}_{n,2}$ satisfies conditions:
\begin{enumerate}
\item [$\cdot$] $\Re \tilde{F}_{n,2}>n$ on $(W_{\text{even}})_{B}(n,\delta_{2},\{\phi_{n,1},\ldots,\phi_{n,2l_{n}}\})$,

\item [$\cdot$] $\Re \tilde{F}_{n,2}>n+1$ on 
$\bigcup\limits_{k=1}^{l_{n}}\left(\alpha_{B}(n+1,\phi_{n,2k},\phi_{n,2k}+\delta_{2})\cup\alpha_{B}(n+1,\phi_{n,2k+1}-\delta_{2},\phi_{n,2k+1})\right)$.
\end{enumerate}
Let $\delta=\min\{\delta_{1},\delta_{2}\}$ and define functions
$\phi_{n+1,1},\phi_{n+1,2},\ldots,\phi_{n+1,2l_{n+1}+1}:B\to[0,2\pi]$ by:
\begin{eqnarray*}
\phi_{n+1,3k+1}&=&\phi_{n,k+1}, \\
\phi_{n+1,3k+2}&=&\phi_{n,k+1}+\delta, \\
\phi_{n+1,3k+3}&=&\phi_{n,k+2}-\delta
\end{eqnarray*}
for $k\in\{0,1,\ldots,2l_{n}-1\}$ and $\phi_{n+1,2l_{n+1}+1}=2\pi$.
Observe that these functions satisfy the condition $(c)_{n+1}$ while functions $\tilde{F}_{n,1},\tilde{F}_{n,2}$ satisfy the conditions:
\begin{enumerate}
\item [$(a^{2})_{n+1}$] $|\tilde{F}_{n,i}(b,x)-F_{n,i}(b,x)|<\frac{1}{2^{n+1}}$ for all $(b,x)\in {B\times K_{n}}$ and $i=1,2$,
\item [$(b^{2})_{n+1}$] $\max\{\Re \tilde{F}_{n,1}(b,x),\Re \tilde{F}_{n,2}(b,x)\}>n$ for all $(b,x)\in W_{B}(n,\delta,\{\phi_{n,1},\phi_{n,2},\ldots,\phi_{n,2l_{n}}\})$,
\item [$(d^{2})_{n+1}$] $\Re \tilde{F}_{n,1}>n+1$ on 
$\bigcup\limits_{k=1}^{l_{n}}\left(\alpha_{B}(n+1,\phi_{n,2k-1},\phi_{n,2k-1}+\delta)\cup\alpha_{B}(n+1,\phi_{n,2k}-\delta,\phi_{n,2k})\right)$, \\
$\Re \tilde{F}_{n,2}>n+1$ on 
$\bigcup\limits_{k=1}^{l_{n}}\left(\alpha_{B}(n+1,\phi_{n,2k},\phi_{n,2k}+\delta)\cup\alpha_{B}(n+1,\phi_{n,2k+1}-\delta,\phi_{n,2k+1})\right)$.
\end{enumerate}

In the third part of the induction step, we correct the functions $\tilde{F}_{n,1},\tilde{F}_{n,2}$ so that we obtain the condition
$(b)_{n+1}$ on the set $L_{B}(n,\delta,\{\phi_{n,1},\ldots,\phi_{n,2l_{n}}\})$ as well as the condition $(d)_{n+1}$
on the remaining arcs. Let us define a proper family of Runge compacts by
\[
(A_{\text{odd}})_{n}=(B\times K_{n})\cup (D_{\text{odd}})_{B}(n,\{\phi_{n,1},\ldots,\phi_{n,2l_{n}}\})\cup 
(L_{\text{even}})_{B}(n,\delta,\{\phi_{n,1},\ldots,\phi_{n,2l_{n}}\}),
\]
see the left part of Figure \ref{Figure Annular regions}, and define the function $\overline{F}_{n,1}\in \cA_{J}((A_{\text{odd}})_{n})$ by
\[
\overline{F}_{n,1}(b,x)=
\left\{ \begin{array} {ll}
      \tilde{F}_{n,1}  &;(b,x)\in (B\times K_{n})\cup (D_{\text{odd}})_{B}(n,\{\phi_{n,1},\ldots,\phi_{n,2l_{n}}\}),\\
      n+2   &;\hspace{1mm}(b,x)\in(L_{\text{even}})_{B}(n,\delta,\{\phi_{n,1},\ldots,\phi_{n,2l_{n}}\}).
       \end{array}\right.
\]
\begin{figure}[h]
  \centering
  \includegraphics[width=7.5cm]{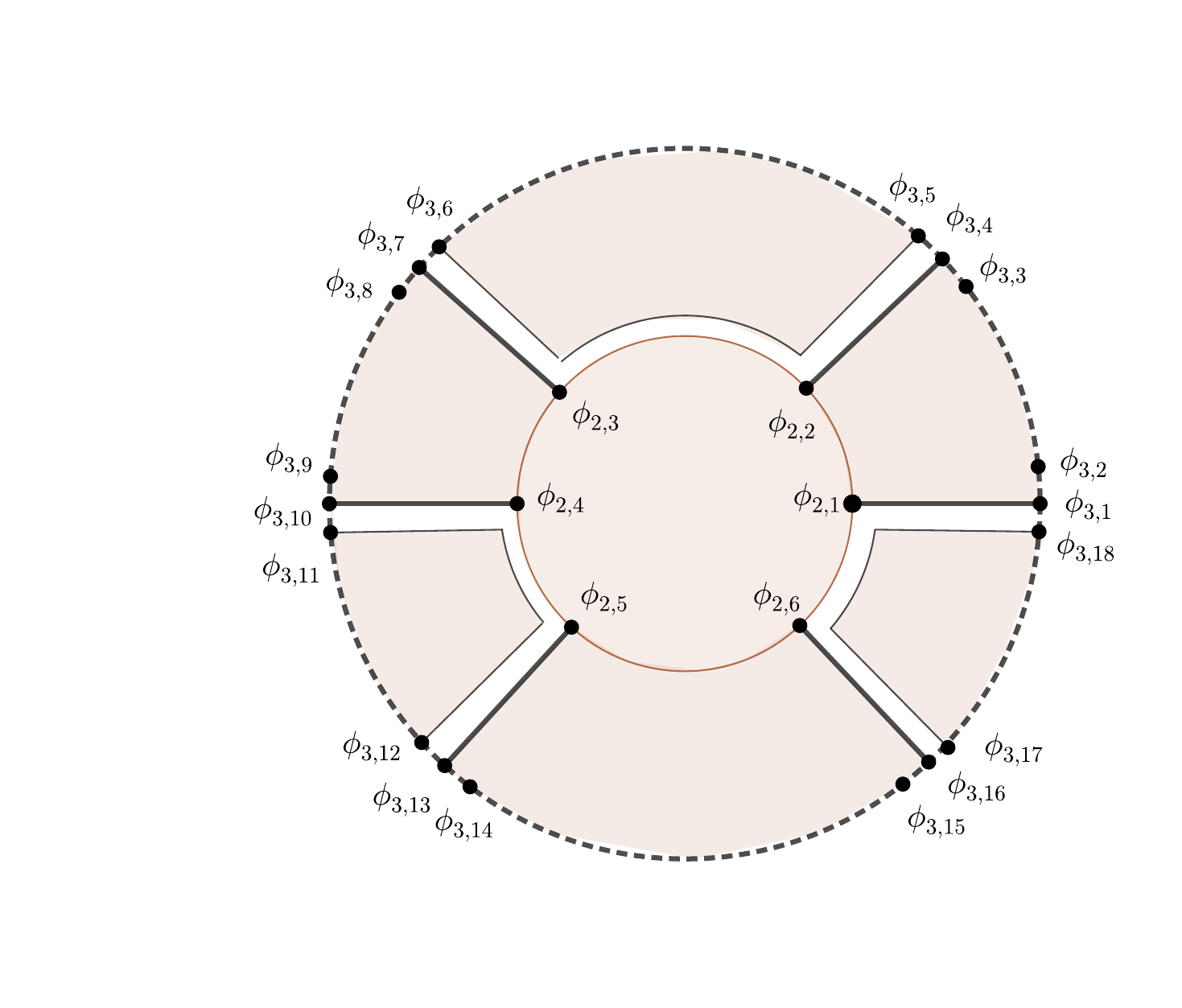}
\hfil
   \includegraphics[width=7.6cm]{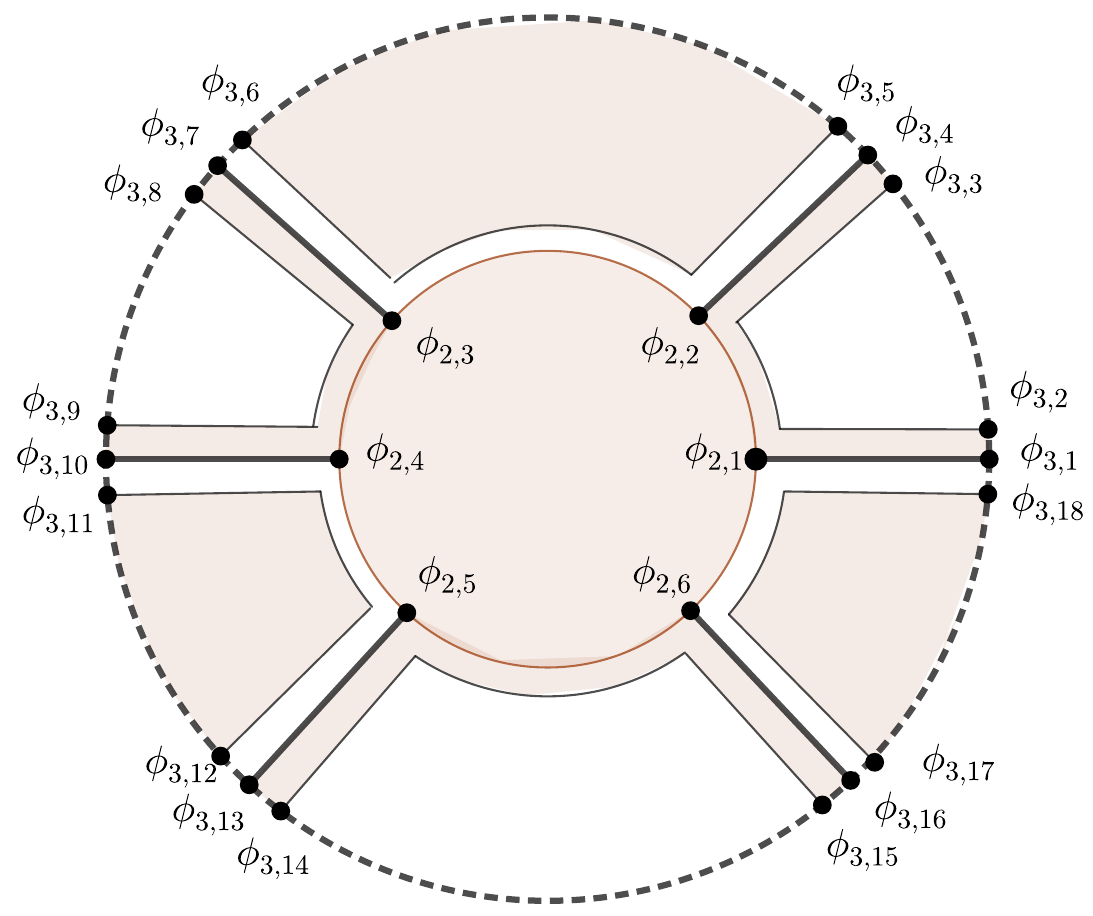}
\caption{Regions in the inductive step in the case $n=2$}\label{Figure Annular regions}
\end{figure} 

Function $\overline{F}_{n,1}$ satisfies conditions:
\begin{enumerate}
\item [$\cdot$] $|\overline{F}_{n,1}(b,x)-F_{n,1}(b,x)|<\frac{1}{2^{n+1}}$ for $(b,x)\in {B\times K_{n}}$,
\item [$\cdot$] $\Re \overline{F}_{n,1}>n$ on $(W_{\text{odd}})_{B}(n,\delta,\{\phi_{n,1},\ldots,\phi_{n,2l_{n}}\})
\cup(L_{\text{even}})_{B}(n,\delta,\{\phi_{n,1},\ldots,\phi_{n,2l_{n}}\})$,
\item [$\cdot$] $\Re \overline{F}_{n,1}>n+1$ on $(\alpha_{\text{odd}})_{B}(n+1,\{\phi_{n+1,1},\ldots,\phi_{n+1,2l_{n+1}}\})$.
\end{enumerate}
By applying Proposition \ref{Proposition Mergelyan approximation with bounds} to the function
$\overline{F}_{n,1}$ with precision at least $\frac{1}{2^{n+1}}$ we obtain a function $F_{n+1,1}\in\cO_{J}(B\times\RR^{2})$ that satisfies:
\begin{enumerate}
\item [$(a^{3,1})_{n+1}$] $|F_{n+1,1}(b,x)-F_{n,1}(b,x)|<\frac{1}{2^{n}}$ for $(b,x)\in {B\times K_{n}}$,
\item [$(b^{3,1})_{n+1}$] $\Re F_{n+1,1}>n$ on $(W_{\text{odd}})_{B}(n,\delta,\{\phi_{n,1},\ldots,\phi_{n,2l_{n}}\})
\cup(L_{\text{even}})_{B}(n,\delta,\{\phi_{n,1},\ldots,\phi_{n,2l_{n}}\})$,
\item [$(d^{3,1})_{n+1}$] $\Re F_{n+1,1}>n+1$ on $(\alpha_{\text{odd}})_{B}(n+1,\{\phi_{n+1,1},\ldots,\phi_{n+1,2l_{n+1}}\})$.
\end{enumerate}
Similarly we obtain a function $F_{n+1,2}\in\cO_{J}(B\times\RR^{2})$ which satisfies conditions:
\begin{enumerate}
\item [$(a^{3,2})_{n+1}$] $|F_{n+1,2}(b,x)-F_{n,2}(b,x)|<\frac{1}{2^{n}}$ for $(b,x)\in {B\times K_{n}}$,
\item [$(b^{3,2})_{n+1}$] $\Re F_{n+1,2}>n$ on $(W_{\text{even}})_{B}(n,\delta,\{\phi_{n,1},\ldots,\phi_{n,2l_{n}}\})
\cup(L_{\text{odd}})_{B}(n,\delta,\{\phi_{n,1},\ldots,\phi_{n,2l_{n}}\})$,
\item [$(d^{3,2})_{n+1}$] $\Re F_{n+1,2}>n+1$ on $(\alpha_{\text{even}})_{B}(n+1,\{\phi_{n+1,1},\ldots,\phi_{n+1,2l_{n+1}}\})$.
\end{enumerate}

The areas in $B\times A_{n+1}$ where $\Re F_{n+1,1}>n$ respectively $\Re F_{n+1,2}>n$ are shown in the right part of Figure \ref{Figure Annular regions}.
Condition $(a)_{n+1}$ now follows from conditions $(a^{3,1})_{n+1}$ and $(a^{3,2})_{n+1}$, condition $(b)_{n+1}$ 
follows from conditions $(b^{3,1})_{n+1}$ and $(b^{3,2})_{n+1}$
while condition $(d)_{n+1}$ follows from conditions $(d^{3,1})_{n+1}$ and $(d^{3,2})_{n+1}$.
The proof is concluded by applying Proposition \ref{Proposition sequence of functions}.
\end{proof}

\begin{proof}[Proof of Theorem \ref{Main theorem}]
Let $K_0=\emptyset$.
Choose a point $b_0\in B$, and a strongly $J_{b_0}$-subharmonic Morse exhaustion function $\tau:X\to (0,\infty)$.
By a small perturbation, we may assume that there is exactly one critical point at every critical level set. 
Choose an increasing sequence $(c_n)_{n\in \N}$ of regular values of $\tau$ converging to $\infty$ such that the interval $(c_n,c_{n+1})$
contains at most one critical value of $\tau$. Then $K_n=\{x\in X\,:\,\tau(x)\le c_n\}$ is a smoothly bounded compact Runge set,
and we may assume that $c_1$ is chosen so large that $K_1$ is nonempty and so small that it is simply connected.
Then $bK_{n}$ is a union of finitely many, say $k_n$, smooth closed Jordan curves. 
If $\tau$ has no critical values in $(c_n,c_{n+1})$, then $K_{n+1}\setminus \Int K_n$ is a union of $k_n$ annular regions, and we call this the 
\emph{noncritical case}. In this case, there is no change in the topology, and the construction is similar to the construction in the proof  
of Theorem \ref{Theorem proper maps on C}. We will explain the details below.
In the \emph{critical case}, $\tau$ has exactly one critical point in $K_{n+1}\setminus \Int K_n$ of index $0$ or $1$. 

If its index is $0$, then it is a minimum of $\tau$ and a new simply connected component appears.
If its index is $1$, then there is a compact Jordan arc $\gamma_n\subset \Int  K_{n+1}\setminus  \Int K_n $ 
transversally attached with both endpoints to $K_n$, and otherwise disjoint from $K_n$, such that $K_n \cup \gamma_n$ 
is a Runge set and a strong deformation retract of $K_{n+1}$. We need to distinguish two cases: either the 
endpoints of the arc  $\gamma_n$ lie on the same component of
$bK_n$ or the arc connects two different components of $bK_n$. 
We choose two distinct points, denoted by $p^j_n$ and $q^j_n$  on each boundary component of $bK_n$ ($j=1,\ldots, k_n$) such that
the endpoints of the arc $\gamma_n$ are $p^j_n$ and $q^l_n$ for some $j,l\in\{1,\ldots, k_n\}$.
The map $F$ is constructed inductively and 
at the critical case, we need to continuously extend the maps $F_{n,1},F_{n,2}:B\times b\gamma_n\to \{z\in\CC:\Re z>n\}$ to maps
$F_{n,1},F_{n,2}:B\times \gamma_n\to \{z\in\CC:\Re z>n\}$. 
This is possible since the set $\{z\in\CC:\Re z>n\}$ is contractible.
Moreover, we will also obtain a continuously varying family of points on each 
boundary component of $K_n$, which corresponds to the continuous family of angles in the proof of Theorem \ref{Theorem proper maps on C},
and the points  $p^j_n$ and $q^j_n$ will correspond to the constant angles with the different parity: for this reason, we choose for each $n$ and for
each $j\in \{1,\ldots k_n\}$ a continuous map $\varphi_n^j$ from $[0,2\pi]$ to the $j$-th component of $bK_n$ which induces a homeomorphism from
the quotient $[0,2\pi]/(0\sim 2\pi)$ to the $j$-th component of $bK_n$, inducing the given orientation. 
Furthermore, we may achieve that $\varphi_{n}^{j}(0)=\varphi_{n}^{j}(2\pi)=p^{j}_{n}$ and $\varphi_{n}^{j}(\pi)=q^{j}_{n}$.

We inductively construct functions $F_{n,1},F_{n,2}\in \cA_{J}(B\times K_{n})$, $n\in\N\cup \{0\}$, 
positive integers $l^j_{n}$, $j\in\{1,\ldots, k_n\}, n\in\N$, continuous functions
$\phi_{n,m}^j:B\to [0,2\pi]$, $m\in\{1, \ldots, 2l^j_{n}+1\}, j\in\{1,\ldots, k_n\}, n\in\N$, that satisfy the following conditions for every $n\in\N$:
\begin{enumerate}
\item [$(a)_n$] $|F_{n,i}(b,x)-F_{n-1,i}(b,x)|<\frac{1}{2^{n-1}}$ for $(b,x)\in {B\times K_{n-1}}$ and $i=1,2$,
\item [$(b)_n$] $\max\{\Re F_{n,1}(b,x),\Re F_{n,2}(b,x)\}>n-1$ for every $(b,x)\in B\times (K_{n}\setminus\Int K_{n-1})$,
\item [$(c)_n$] $ 0=\phi_{n,1}^j(b)<\phi_{n,2}^j(b)<\cdots<\phi_{n,2l^j_{n}}^j(b)<2\pi=\phi_{n,2l^j_{n}+1}^j(b)$ for each $b\in B$ and \goodbreak $ j\in\{1,\ldots, k_n\}$;
 for each $j\in\{1,\ldots, k_n\}$ there is $m^j_{n}\in\{1, \ldots, l^j_{n}\}$ such that $\phi_{n,2m^j_{n}}^j\equiv \pi$,
\item [$(d)_n$] $ \Re F_{n,1}(b,x)>n$ for $x\in \varphi_n^j ([\phi_{n,2m-1}^j(b),\phi_{n,2m}^j(b)])$ 
and\\
$ \Re F_{n,2}(b,x)>n$ for $x\in \varphi_n^j ([\phi_{n,2m}^j(b),\phi_{n,2m+1}^j(b)])$ for each $m\in\{1, \ldots, l^j_{n}\}$, and \goodbreak $j\in\{1,\ldots, k_n\}$.
\end{enumerate}

Once we complete the construction, the proof is complete due to Proposition \ref{Proposition sequence of functions}.

To start the induction take $F_{0,1}=F_{0,2}=F_{1,1}=F_{1,2}=2$, $l_1^j=1$, $m_1^j=1$ for  $j\in\{1,\ldots, k_1\}$, which satisfy $(a)_{1}-(d)_{1}$.

Assume we have already constructed $F_{i,1},F_{i,2}$,  $l^j_{i}$, $m^j_{i}$, 
$\phi_{i,m}^j$, $m\in\{1, \ldots, 2l^j_{i}+1\}$, $j\in\{1,\ldots, k_i\}$, $i\in\{1,\ldots n\}$, that satisfy $(a)_{i}-(d)_{i}$ for all $i\in\{1,\ldots n\}$.
We construct the functions $F_{n+1,1},F_{n+1,2}$ by dividing each component of the set $K_{n+1}\setminus \Int K_{n}$ into two unions of simply
connected regions that play the roles of $(D_{\text{odd}})_{B}$ and $(D_{\text{even}})_{B}$ in the proof of Theorem \ref{Theorem proper maps on C}.

In the noncritical case, the set $K_{n+1}\setminus \Int K_{n}$ is homeomorphic to a disjoint union of $k_{n}=k_{n+1}$ annular components which
we denote by $A_{n}^{j}$ for $j=1,2,\ldots,k_{n}$. Suppose that the boundary components of $A_{n}^{j}$ are parametrised by
$\varphi_{n}^{j_{\text{inner}}}$ and $\varphi_{n+1}^{j_{\text{outer}}}$. We may choose a diffeomorphism 
$\psi^{j}_{n}:\{z\in\CC\,:\,1\le |z|\le 2\} \to A^{j}_{n}$ such that
$\psi^{j}_{n}(\e^{i t}) =\varphi_{n}^{j_{\text{inner}}}(t)$ and $\psi^{j}_{n}(2\e^{i t})=\varphi_{n+1}^{j_{\text{outer}}}(t)$ for $t\in[0,2\pi)$.
Denote by $\gamma'^{j}_{n}$ the arc $\psi^{j}_{n}([1,2])$ and by $\gamma''^{j}_{n}$ the arc $\psi^{j}_{n}([-2,-1])$.
By the property $(d)_n$ we can continuously extend the maps $F_{n,1},F_{n,2}$ from  $B\times K_{n}$ to maps from $B\times (K_{n}\cup\gamma'^j_n\cup \gamma''^j_n)$  
so that the image 
of $B\times (\gamma'^j_n\cup \gamma''^j_n)$  lies in $\{z\in\CC:\Re z>n\}$
and the image of  $B\times \{p_{n+1}^{j}\}$ and of $B\times \{q_{n+1}^{j}\}$ lies in $\{z\in\CC:\Re z>n+1\}$.
Then we proceed as  in the proof of Theorem \ref{Theorem proper maps on C} to obtain 
functions  $F_{n+1,1},F_{n+1,2}\in \cA_{J}(B\times K_{n+1})$, integers $l^j_{n+1}$, $m^j_{n+1}$, 
and functions $\phi_{n+1,i}^j$ ($i\in\{1, \ldots, 2l^j_{n+1}+1\}$, $j\in\{1,\ldots, k_{n+1}\}$) satisfying properties $(a)_{n+1}-(d)_{n+1}$.

In the critical case, we only need to consider critical points with the index $1$, since we can treat the new appearing component in the case of critical points with index $0$ 
in the same way as at the start of the inductive construction.
Thus, we first consider the situation in which the arc $\gamma_n$ connects two different components of $bK_n$. 
Then the number of components of $bK_{n+1}$ is one less than the number of components of $bK_{n}$. By rearranging the notation, we may assume
that $\gamma_n$ connects $p^{k_n-1}_n$ and $q^{k_n}_n$.
The set  $K_{n+1}\setminus\Int K_{n}$ is a union of a two-connected domain $D_n$ in $X$,  and perhaps a
finite number of annuli, where the arc $\gamma_n\subset D_n$ connects two components of the complement of $D_n$ in $X$. 
In the annular regions of $K_{n+1}\setminus\Int K_{n}$  we proceed as in the 
noncritical case, thus we provide the details only for the construction corresponding the  domain $D_n$.
Since the domain $D_n$ is two-connected, we first explain how we choose continuous family of arcs connecting the boundary of $bK_n$ and $bK_{n+1}$,
corresponding to the arcs $\gamma(n,\{\phi_1,\ldots,\phi_k\})$ in the proof of
Theorem \ref{Theorem proper maps on C}. 
We can choose  pairwise disjoint smooth arcs $\gamma^{p}_n$ and $\gamma^{q}_n$ in $K_{n+1}\setminus (\Int K_n \cup \gamma_n)$ 
which intersect $bK_n$ and $bK_{n+1}$ transversally at their endpoints
such that the endpoints of $\gamma^{p}_n$  are $p_n^{k_n}$ and $p_{n+1}^{k_{n+1}}$, 
and the endpoints of $\gamma^{q}_n$ are $q_{n}^{k_n-1}$ and $q_{n+1}^{k_{n+1}}$, see the left part of Figure \ref{Figure Critical case 1}. 
\begin{figure}[h]
  \centering
  \includegraphics[width=7.5cm]{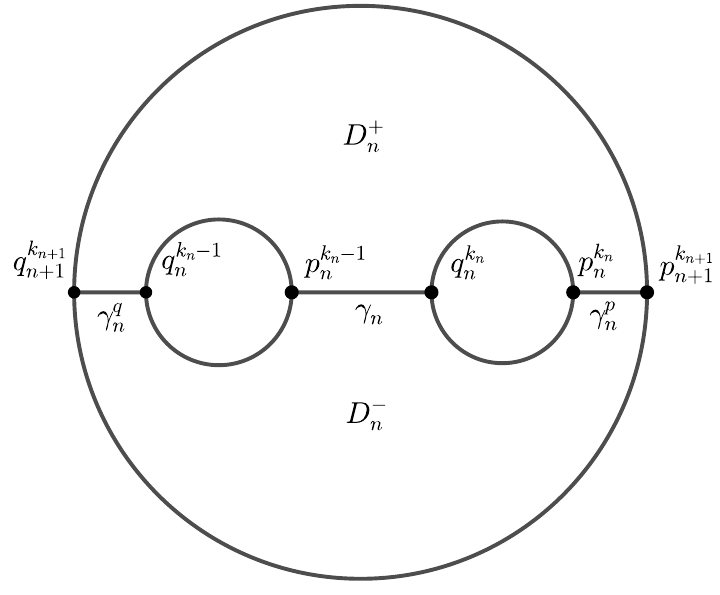}
\hfil
   \includegraphics[width=7.8cm]{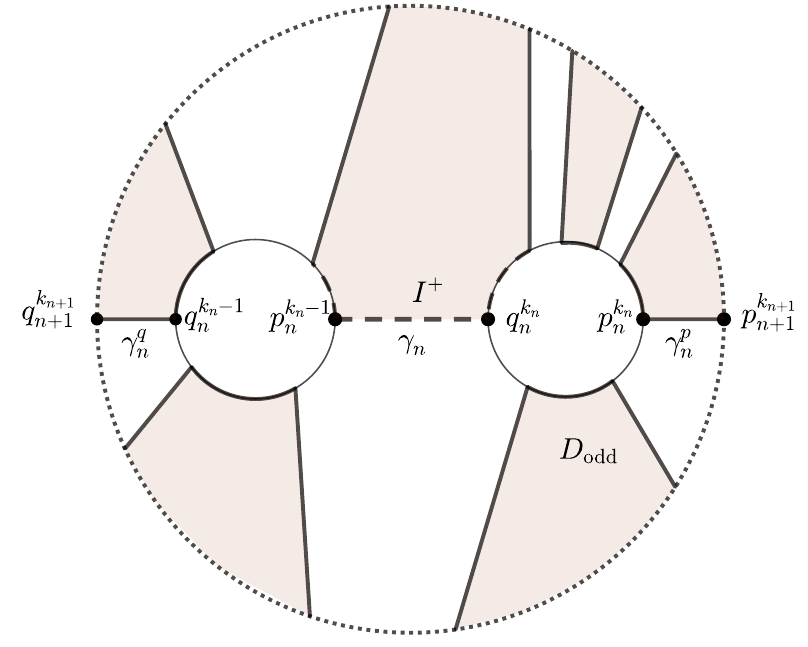}
\caption{Critical case 1}\label{Figure Critical case 1}
\end{figure}  

Then the domain $D_{n}$ is the union of two closed simply connected domains $D_{n}^{+}$ and $D_{n}^{-}$ with the arcs 
$\gamma_{n}$, $\gamma^{p}_{n}$ and $\gamma^{q}_{n}$ as their common boundary.
There is a diffeomorphism $\Psi_{n}^{+}$ from $D_{n}^{+}$ 
to the convex hull $C$ of points $(2,0), (2,1), (1,2), (-1,2), (-2,1), (-2,0)$ in $\RR^{2}$
that maps $q_{n+1}^{k_{n+1}}$ to $(-2,0)$, $q_{n}^{k_{n}-1}$ to $(-2,1)$, $p_{n}^{k_{n}-1}$ 
to $(-1,2)$, $q_{n}^{k_{n}}$ to $(1,2)$, $p_{n}^{k_{n}}$ to $(2,1)$, and $p_{n+1}^{k_{n+1}}$ to $(2,0)$ 
(and similarly for $D_{n}^{-}$).
The vertical segments in $C$ provide arcs in $D_{n}^{+}$. 
More precisely, for any $\phi\in (0,\pi)$ the points $\varphi_{n}^{k_{n-1}}(\phi)$, $\varphi_n^{k_{n}}(\phi)$ from $bK_{n}$ are 
mapped to points $(x',y')$, $(x'',y'')$ for some $x'\in (-2,-1)$, $x''\in (1,2)$ and $y',y''>0$  on the beveled edges of $C$. 
Then segments from $(x',y')$ to $(x',0)$
and $(x'',y'')$ to $(x'',0)$ mapped back to $D_{n}^{+}$ by  $(\Psi_{n}^{+})^{-1}$ give the required arcs. 
This construction gives a Runge family, and the proof is reduced to the proof in the noncritical case:
First we extend the maps $F_{n,1},F_{n,2}$ continuously to maps from 
$B\times (K_n\cup\gamma_{n}\cup \gamma^{p}_{n}\cup \gamma^{q}_{n})$ 
such that the image of $B\times (\gamma_{n}\cup \gamma^{p}_{n}\cup \gamma^{q}_{n})$ 
lies in $\{z\in\CC:\Re z>n\}$, and
that the image  of  $B\times \{p_{n+1}^{k_{n+1}}\}$ and $B\times \{q_{n+1}^{k_{n+1}}\}$ 
lies in $\{z\in\CC:\Re z>n+1\}$. 
The continuous extension to $B\times(\gamma_{n}\cup \gamma^{p}_{n}\cup \gamma^{q}_{n})$ with the image in  $\{z\in\CC:\Re z>n\}$ is possible by the property $(d)_n$ and 
 since the former set is contractible.
Now the construction can proceed similarly to the construction in the regular case, and
here we explain the main differences: 
In the noncritical case, the functions $\phi_{n,j}$ determined  boundary arcs $(\alpha_\text{odd})_B$ 
and $(\alpha_\text{even})_B$ such that $\Re F_{n,1}>n$ on  $(\alpha_\text{odd})_B$, and
$\Re F_{n,2}>n$ on  $(\alpha_\text{even})_B$.
In the critical case, we start at the point $p_n^{k_n}$ on $bK_n$ and move in the positive direction along $k_n$-th component of $bK_n$; 
we first get some arcs with alternating parity until we reach the point $\varphi_n^{k_n}(\phi_{n,2m_n^{k_n}-1}^{k_n}(b))$. 
These arcs determine the domains in $D_n^+$ that belong to 
$D_\text{odd}$, $D_\text{even}$ as before. Observe that for all $b\in B$ and $x\in \varphi_n^{k_n}([\phi_{n,2m_n^{k_n}-1}^{k_n}(b),\pi])\cup \gamma_n\cup 
\varphi_n^{k_n-1}([0, \phi_{n,2}^{k_n-1}(b)])=:I^+(b)$ we have
$\Re F_{n,1}(b,x)>n$. Therefore, the set $I^+$ can be seen as a part of $(\alpha_\text{odd})_B$, and the corresponding domain as a part of $D_\text{odd}$.
As we move further along the boundary of the $(k_n-1)$-th component of $bK_n$ in the positive direction, from $\varphi_n^{k_n-1}(\phi_{n,2}^{k_n-1}(b))$
to $\varphi_n^{k_n-1}(\phi_{n,2l_n^{k_n-1}}^{k_n-1}(b))$ we obtain alternating arcs and domains as before, 
first we get some from $D_n^+$ and then some in $D_n^-$.
Similarly to the above, we have
for all $b\in B$ and $x\in \varphi_n^{k_n-1}([\phi_{n,2l_n^{k_n-1}}^{k_n-1}(b),2\pi])\cup \gamma_n\cup 
\varphi_n^{k_n}([\pi, \phi_{n,2m_n^{k_n}+1}^{k_n}(b)])=:I^-(b)$ that
$\Re F_{n,2}(b,x)>n$, and the set $I^-$ can be viewed as a part of $(\alpha_\text{even})_B$, and the corresponding domain as a part of $D_\text{even}$.
As we move further along the boundary of the $k_n$-th component of $bK_n$, we get  some arcs with alternating parity until we reach the starting point $p_n^{k_n}$.
Then we proceed with the proof  as in the noncritical case. On the right part of Figure \ref{Figure Critical case 1}, we denoted the arcs in 
$(\alpha_\text{odd})_B$ darker than the arcs in $(\alpha_\text{even})_B$.

In the second case, the endpoints of the arc  $\gamma_n$ lie on the same component of
$bK_n$. In this case, the number of components of $bK_{n+1}$ is one greater than the number of components of $bK_{n}$.
By rearranging the notation, we may assume that the endpoints of $\gamma_n$ are $p^{k_n}_n$ and $q^{k_n}_n$. 
The set  $K_{n+1}\setminus\Int K_{n}$ is a union of a two connected domain $D_n$ in $X$,  and perhaps a
finite number of annuli,
where the set $\gamma_n\cup (bK_n\cap D_n)$ separates the other boundary components of $D_n$.
We can choose smooth arcs $\gamma'^{p}_{n}$, $\gamma'^{q}_{n}$, $\gamma''^{p}_{n}$, $\gamma''^{q}_{n}$ in $K_{n+1}\setminus \Int K_{n}$ 
which intersect $bK_{n}$ and $bK_{n+1}$ transversally and only at their endpoints, 
such that the endpoints of $\gamma'^{p}_{n}$ are $p_{n}^{k_{n}}$ and $p_{n+1}^{k_{n+1}-1}$, 
the endpoints of  $\gamma'^{q}_{n}$ are  $q_{n}^{k_{n}}$ and $q_{n+1}^{k_{n+1}-1}$,
the endpoints of $\gamma''^{p}_{n}$ are $p_{n}^{k_{n}}$ and $p_{n+1}^{k_{n+1}}$, the endpoints of $\gamma''^{q}_{n}$ are $q_{n}^{k_{n}}$ and 
$q_{n+1}^{k_{n+1}}$. Furthermore, we can achieve that arcs $\gamma_{n}$, $\gamma'^{p}_{n}$, $\gamma'^{q}_{n}$, $\gamma''^{p}_{n}$, 
$\gamma''^{q}_{n}$ intersect pairwise at most at their endpoints.
We denote by $D_{n}^{+}$ the simply connected component of the set 
$D_{n}\setminus (\gamma'^{p}_{n}\cup \gamma'^{q}_{n})$. Assume that $D_{n}^{+}$ contains $\varphi_{n}^{k_{n}}((0,\pi))$, 
the other case is symmetrical.
Let $D_{n}^{-}$ be the simply connected component of the set $D_{n}\setminus (\gamma''^{p}_{n}\cup \gamma''^{q}_{n})$ 
which contains $\varphi_{n}^{k_{n}}((\pi,2\pi))$. See the left part of Figure \ref{Figure Critical case 2}.

There is a diffeomorphism $\psi_{n}^{+}$ from $\bar D_{n}^{+}$ (and $\psi_{n}^{-}$ from $\bar D_{n}^{-}$) 
to the unit square $[0,1]\times[0,1]$ in $\RR^{2}$  
that maps the arcs 
$\gamma'^{p}_{n}$, $\gamma'^{q}_{n}$ ($\gamma''^{p}_{n}$ ,$\gamma''^{q}_{n}$) to the vertical edges, 
and the arc $\varphi_{n}^{k_{n}}((0,\pi))$, ($\varphi_{n}^{k_{n}}((\pi,2\pi))$) to the upper edge of the square. 
By the properties $(c)_{n}-(d)_{n}$ there are continuous functions $\phi^{p\pm}_{n},\phi^{q\pm}_{n}\colon B\to (0,2\pi)$ such that 
for each $b\in B$ we have $\phi^{p+}_{n}(b)\in (0,\phi_{n,2}^{k_{n}}(b))$, $\phi^{p-}_{n}(b)\in (\phi_{n,2l_n^{k_{n}}}^{k_{n}}(b),2\pi)$,
$\phi^{q+}_{n}(b)\in (\phi_{2m_n^{k_n}-1}^{k_n}(b),\pi)$, $\phi^{q-}_n(b)\in (\pi,\phi^{k_n}_{2m_n^{k_n}+1}(b))$
and such that the restrictions of the maps $F_{n,1},F_{n,2}$ to the arcs $\varphi_{n}^{k_{n}}([0,\phi^{p+}_{n}(b)])$, 
$\varphi_{n}^{k_{n}}([\phi^{p-}_{n}(b),2\pi])$, $\varphi_{n}^{k_{n}}([\phi^{q+}_{n}(b),\pi])$ and $\varphi_{n}^{k_{n}}([\pi,\phi^{q-}_{n}(b)])$
map into $\{z\in\CC:\Re z>n\}$. Note that in this step we added $4$ functions to the family  $\phi^{k_{n}}_{n,i}$.
For every $b\in B$ and every $t\in [0,1]$ we get a segment from $(t,0)$ to 
$((1-t)\psi_n^+(\varphi_n^{k_n}(\phi^{p+}_n(b)))+t\psi_n^+(\varphi_n^{k_n}(\phi^{q+}_n(b))),1)$ in the unit square, 
and by pushing back with 
$(\psi_n^+)^{-1}$ we obtain a family of arcs in $\bar D_n^+$ that correspond to the union of radial line segments (and similarly for 
$\bar D_n^-$).
In particular, for $t=0$ we get arc from $ \varphi_n^{k_n}(\phi^{p+}_n(b))$ to $p_{n+1}^{k_{n+1}-1}$, and 
for $t=1$ we get the arc from $ \varphi_n^{k_n}(\phi^{q+}_n(b))$ to $q_{n+1}^{k_{n+1}-1}$.
Next, we explain how we divide the domain $D_{n}$ into domains $D_\text{odd}$ and $D_\text{even}$ which reduces the proof to the proof 
in the noncritical case (see the right part of Figure \ref{Figure Critical case 2}). 
\begin{figure}[H]
  \centering
  \includegraphics[width=6.5cm]{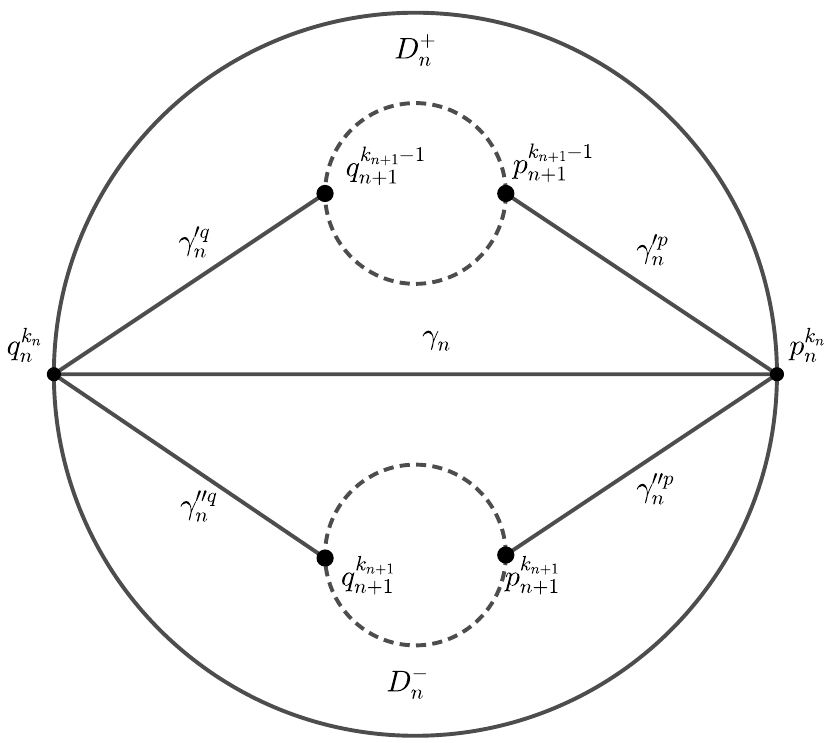}
\hfil
   \includegraphics[width=7.5cm]{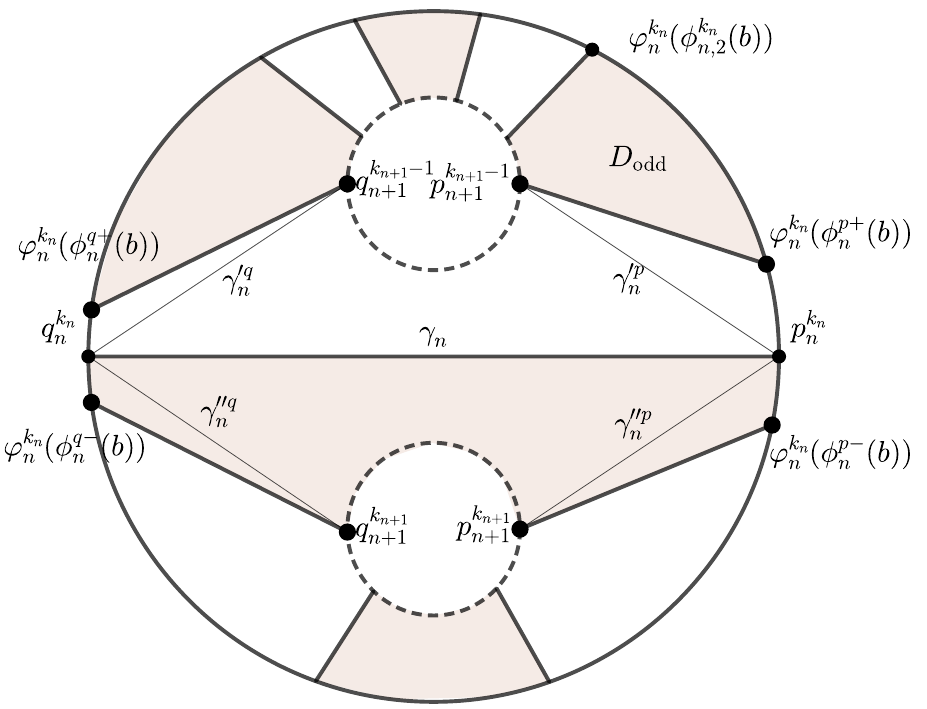}
  \caption{Critical case 2}\label{Figure Critical case 2}
\end{figure}
We start with the point $\varphi_n^{k_n}(\phi_n^{p+}(b))$ and move in the positive
direction along $k_{n}$-th component of $bK_{n}$. We get arcs with alternating parity until 
we reach the point $\varphi_{n}^{k_{n}}(\phi_{n}^{q+}(b))$ determining the domains which alternately belong to $D_\text{odd}$, $D_\text{even}$. 
For $b\in B$ and $x\in\varphi_n^{k_{n}}([\phi_{n}^{q+}(b),\pi])\cup \gamma_n\cup 
\varphi_n^{k_{n}}([0,\phi_{n}^{p+}(b)])=:I^{+}(b)$ it holds that
$\Re F_{n,2}(b,x)>n$, thus, $I^+$ can be taken as a part of $\alpha_\text{even}$ and the corresponding domain as a part of $D_\text{even}$.
For $b\in B$ and $x\in\varphi_n^{k_n}([\pi,\phi_n^{q-}(b)])\cup \gamma_n\cup 
\varphi_n^{k_n}([\phi_n^{p-}(b),2\pi])=:I^{-}(b)$ we have that
$\Re F_{n,1}(b,x)>n$, thus, $I^{-}$ can be taken as a part of $\alpha_\text{odd}$ and the corresponding domain as a part of $D_\text{odd}$.
From the point $\varphi_n^{k_n}(\phi_n^{q-}(b))$ we move in the positive direction along $bK_n$ until we 
reach the point $\varphi_n^{k_n}(\phi_n^{p-}(b))$ and again the points $\varphi_n^{k_n}(\phi_{n,i}^{k_n}(b))$
determine the arcs with alternating parity. Again, this reduces the construction to the noncritical case, which completes the proof.
\end{proof}

\begin{proof}[Proof of Theorem \ref{harmonic}]
In the proof of Theorem \ref{Main theorem}, we constructed a continuous map  
 $F:B\times X\to\CC^{2}$ such that for every $b\in B$ the map $F(b,\cdot):(X,J_b)\to\CC^{2}$ is proper holomorphic,
 and, furthermore, for every $b\in B$, $\max\{ \Re F_1(b,\cdot),\Re F_2(b,\cdot)\}$ goes to infinity as we leave any compact set of $X$, which implies that
the map $(\Re F_1,\Re F_2)(b,\cdot):(X,J_b)\to\RR^2$ is proper harmonic.
\end{proof}

\noindent
{\bf Acknowledgements:} 
The first named author is supported by the European Union (ERC Advanced grant HPDR, 101053085 to F. Forstnerič)
and by the research program P1-0291 from ARIS, Republic of Slovenia.
The second named author is supported by the research program P1-0291 from ARIS, Republic of Slovenia. 
The authors wish to thank the members of the Complex Analysis Seminar in Ljubljana for their remarks, in particular Rafael B. Andrist and Franc Forstneri\v c.


\end{document}